\newtheorem{theorem}{Theorem}[section]
\newtheorem{proposition}[theorem]{Proposition}
\newtheorem{lemma}[theorem]{Lemma}
\newtheorem{corollary}[theorem]{Corollary}
\theoremstyle{remark}
\newtheorem{remark}[theorem]{Remark}
\numberwithin{equation}{section}
\newcommand{\abs}[1]{\lvert#1\rvert}
\newcommand{\norm}[1]{\|#1\|}
\newcommand{\Tnorm}[1]{\lvert\lvert\lvert#1\rvert\rvert\rvert}
\begin{document}

\title{A priori error estimates and computational studies
for a  Fermi  pencil-beam equation}


\author{M. Asadzadeh$^{\ast,1}$, L. Beilina$^{\ast}$,  M. Naseer$^{\ast \ast}$ 
and C. Standar\footnote{
$^{,1}$ Corresponding author, $^*$ Department of Mathematical Sciences, Chalmers University of Technology and
Gothenburg University, SE-42196 Gothenburg, Sweden, e-mail: mohammad@chalmers.se, larisa@chalmers.se,  standarc@chalmers.se, 
$ ^{\ast \ast} $ e-mail: na.seer@hotmail.com
  } }



\date{}

\maketitle

\graphicspath{{Version_271015/}}

{\sl Keywords: Fermi and Fokker-Planck pencil-beam equations, adaptive finite element method, duality argument, a priori error estimates,  efficiency, reliability}

\begin{abstract}

We derive a priori error estimates for the standard Galerkin and 
streamline diffusion finite element methods for the Fermi pencil-beam equation 
obtained from a fully three dimensional Fokker-Planck equation 
in space ${\mathbf x}=(x,y,z)$ and velocity  
$\tilde {\mathbf v}=(\mu, \eta, \xi)$ variables. The Fokker-Planck term  
appears 
as a Laplace-Beltrami operator in the unit sphere. The diffusion term in the 
Fermi 
equation is obtained as a projection of the FP operator onto the tangent plane 
to the unit sphere at 
the pole $(1,0,0)$ and in  the direction of 
$ {\mathbf v}_0=(1,\eta, \xi)$. Hence the Fermi equation, stated 
in three dimensional spatial domain 
${\mathbf x}=(x,y,z)$, depends only on 
two velocity variables ${\mathbf v}=(\eta, \xi)$. 
Since, for a certain number of cross-sections, there is a 
closed form analytic solution available for the Fermi equation, hence an 
a posteriori error estimate procedure is unnecessary and in our 
adaptive algorithm for local mesh 
refinements we employ the a priori approach. 
Different numerical examples, in two space dimensions are 
justifying the theoretical results. Implementations show
significant reduction of the computational error by using our adaptive
algorithm.

\end{abstract}

\section{Introduction}

This work is a further development of studies in
\cite{MA97}-\cite{MA-Sopa:2002} where adaptive finite element method
was proposed for a reduction of computational cost in numerical approximation 
for pencil-beam
equations. However, focusing in theoretical convergence and stability 
aspects, except some special cases with 
limited amount of implementation in \cite{MA-EL:2007} and \cite{MA-Sopa:2002}, 
the 
detailed numerical tests were postponed to future works. 
Here, first we construct and analyze fully discrete schemes using 
both standard Galerkin and flux correcting streamline diffusion 
finite element methods for the Fermi pencil beam equation in 
three dimensions. We consider the direction of $x$-axis as the penetration  
direction of the beam particles, in the two-dimensional transverse 
spatial domain 
$\Omega_\perp:=\{x_\perp\vert x_\perp:=(y,z)\}$, moving 
with velocities in $\Omega_{\mathbf v}:\{(\eta, \xi)\}$, 
(where we have assumed $\mu\equiv 1$). We have derived our error estimates, in this geometry,
 while in numerical implementations 
we have also considered  examples in lower dimensions.

More specifically, our study concerns a ``pencil beam'' of 
neutral or charged particles 
that are normally incident on a slab of finite thickness at the spatial origin 
$(0,0,0)$ and in the direction of the positive $x$-axis. The governing equation 
for the pencil beam problem is the Fermi equation which is 
obtained by two equivalent approaches (see \cite{Borgers1996}): 
either as an asymptotic 
limit of the linear Boltzmann equation as the transport cross-section 
$\sigma_{tr}\to 0$ and the total cross-section $\sigma_t\to \infty$, or 
as an asymptotic limit in Taylor expansion of angular flux with respect  
to the velocity where the terms with 
derivatives of order three or higher are ignored. 
This procedure rely on a approach that follows the Fokker-Planck 
development. 

The Boltzmann transport equation modeling the energy independent 
pencil beam process can be written as a two-point boundary value problem viz, 
\begin{equation}\label{Boltzmann1}
\begin{split}
\mu \frac{\partial u}{\partial x}+\eta\frac{\partial u}{\partial y}+
\xi \frac{\partial u}{\partial z}=
\int_{S^2}\sigma_s({\mathbf v}\cdot {\mathbf v}^\prime)
[u({\mathbf x},{\mathbf v}^\prime)-u({\mathbf x},{\mathbf v})]\, 
d^2 {\mathbf v}^\prime, \quad 0<x<1,
\end{split}
\end{equation}
where ${\mathbf x}=(x,y,z)$ and ${\mathbf v}=(\mu, \eta, \xi)$ are 
the space and velocity vectors, respectively. The model problem concerns 
 sharply forward peaked beam of particles  
entering the spatial domain at $x=0$: 
\begin{equation}\label{Boltzmann2}
u(0, y, z,  {\mathbf v})=\delta(y)\delta(z)\frac{\delta(1-\mu)}{2\pi}, 
\qquad 0<\mu\le 1, 
\end{equation}
which are demising leaving the domain at $x=1$ (or $x=L$), viz, e.g. 
\begin{equation}\label{Boltzmann3}
u(1, y, z,  {\mathbf v})=0, 
\qquad -1\le\mu< 0. 
\end{equation}
In the realm of the Boltzmann transport equation \eqref{Boltzmann1} an overview
of the transport theory of charged particles can be found in \cite{Luo.Brahme:93}. 
In this setting a few first coefficients in a Legendre polynomial expansion 
for $\sigma_{s}$ and its integral $\sigma_t$ are parameters corresponding to 
some physical quantities of vital importance. For instance, 
the slab width in the unit of mean free path: $\sigma_t^{-1}$, 
is the reciprocal of the total cross-section 
$$
\sigma_t=2\pi\int_{-1}^1\sigma_s (\omega)\, d\omega.
$$
In the absorptionless case, the differential scattering cross section 
is given by 
\begin{equation}\label{Boltzmann4}
\sigma_s(\omega)=\sigma_t\sum_{k=0}^\infty \frac{2k+1}{4\pi}c_kP_k(\omega), 
\quad c_0=1,\quad c_1=\omega, 
\end{equation}
with $P_k(\omega)$ being the Legendre polynomial of degree $k$. 
The Fokker-Planck approximation to problem \eqref{Boltzmann1}, is based on 
using spherical harmonics expansions and yields the following, 
degenerate type partial differential equation 
\begin{equation}\label{Fokker-Planck1}
\begin{split}
\mu \frac{\partial u}{\partial x}+\eta\frac{\partial u}{\partial y}+
\xi \frac{\partial u}{\partial z}=
\frac{\sigma_{tr}}{2}\Delta_{\mathbf V}u({\mathbf x},{\mathbf v}), 
 \qquad 0<x<1,
\end{split}
\end{equation}
 associated with the same boundary data as \eqref{Boltzmann2} and 
\eqref{Boltzmann3}, and with $\Delta_{\mathbf V}$ denoting the Laplace-Beltrami 
operator 
\begin{equation}\label{Fokker-Planck2}
\Delta_{\mathbf V}:=\Big[
\frac{\partial}{\partial \mu}(1-\mu^2) \frac{\partial}{\partial \mu}+
\frac {1}{1-\mu^2}\frac{\partial^2}{\partial\phi^2}
\Big].
\end{equation}
Here, $\phi$ is the angular variable appearing in the polar representation 
$\eta=\sqrt{1-\mu^2}\cos\phi$, \qquad $\xi=\sqrt{1-\mu^2}\sin\phi$. Further, 
$\sigma_{tr}$ is the transport cross-section defined by 
$$
\sigma_{tr}=\sigma_t(1-\omega).
$$
A thorough exposition of the Fokker-Planck operator as an asymptotic limit  is given by Pomraning in 
\cite{Pomraning:92}. 
Due to successive asymptotic limits used in deriving the 
Fokker-Planck approximation, 
it is not obvious that this approximation is sufficiently 
accurate to be considered as a model for the pencil beams. 
However, for sufficiently small transport cross-section 
$\sigma_{tr}<<1$, Fermi proposed the following form of, projected, 
Fokker-Planck model: 
 \begin{equation}\label{Fermi1}
\frac{\partial u}{\partial x}+\eta\frac{\partial u}{\partial y}+
\xi \frac{\partial u}{\partial z}=\frac{\sigma_{tr}}{2}
\Big(\frac{\partial^2}{\partial\eta^2} +\frac{\partial^2}{\partial\xi^2} \Big)
u({\mathbf x},{\mathbf v}), 
 \qquad 0<x<1,
\end{equation} 
with 
\begin{equation}\label{Fermi2}
u(0,y,z,\eta,\xi)=\delta(y) \delta(z) \delta(\eta)\delta(\xi).   
\end{equation} 
Fermi's approach is different from the asymptotic ones 
and uses physical reasoning based on modeling cosmic rays. 
Note that the Fokker-Planck operator on the right hand side of 
 \eqref{Fokker-Planck1}, i.e. \eqref{Fokker-Planck2}, is the Laplacian on 
the unit sphere. The tangent plane to the unit sphere $S^2$ at 
the point $\mu_0:=(1,0,0)$ is an ${\mathcal O}(\eta^2+\xi^2)$ approximation 
to the $S^2$ at the vicinity of $\mu_0$. Extending $(\eta, \xi)$ to 
${\mathbb R}^2$, the Fourier transformation with respect to 
$y, z, \eta$, and $\xi$, assuming constant $\sigma_{tr}$, 
yields the following exact solution for the angular flux 
\begin{equation}\label{Fermi3}
u(x,y,z,\eta,\xi)=
\frac 3{\pi^2\sigma_{tr}^2x^4}\exp\Big[
-\frac 2{\sigma_{tr}}\Big(
\frac{\eta^2+\xi^2}{x}-3 \frac{y\eta+z\xi}{x^2}+
3\frac{y^2+z^2}{x^3}
\Big)
\Big] .
\end{equation} 
The closed form solution \eqref{Fermi3} was first derived by Fermi  as referred in \cite{Rossi1941}.  Eyges 
\cite{Eyges:1948} has extended this exact solution to the case of 
an $x$-depending 
$\sigma_{tr}=\sigma_{tr}(x)$. However, for the general case of 
$\sigma_{tr}({\mathbf x})=\sigma_{tr}(x,y,z)$, 
the closed-form analytic solution is not 
known. To obtain the scalar flux we integrate \eqref{Fermi3} over 
$(\eta, \xi)\in {\mathbb R}^2$: 
\begin{equation}\label{Fermi4}
\widetilde U(x,y,z)=\int_{{\mathbb R}^2}u(x,y,z,\eta,\xi)\, d\eta d\xi= 
\frac 3{2\pi\sigma_{tr}x^3}\exp\Big[-\frac 3{2\sigma_{tr}}
\Big( \frac{y^2+z^2}{x^3}\Big)
\Big]. 
\end{equation} 
Equation  \eqref{Fermi4} satisfies the transverse diffusion 
equation 
\begin{equation}\label{Fermi5}
\frac{\partial \widetilde U}{\partial x}=
\frac{\sigma_{tr} x^2}{2}\Big(\frac{\partial^2 \widetilde U}{\partial y^2}+
\frac{\partial^2\widetilde U}{\partial z^2}\Big),
\end{equation} 
with 
\begin{equation}\label{Fermi6}
\widetilde U(0,y,z)=\delta(y)\delta(z).
\end{equation}

Restricted to bounded phase-space domain, 
Fermi equation \eqref{Fermi1} can be written as the following 
``initial'' boundary value problem 
\begin{equation}\label{Fermi7}
\left\{\begin{array}{ll}
u_x+{\mathbf v}\cdot \nabla{_\perp}u=\frac{\sigma_{tr}}{2}\Delta_{\mathbf V}u \qquad & \mbox{in} \quad \Omega:=\Omega_{\mathbf x}\times  \Omega_{\mathbf v},\\
\nabla_{\mathbf v} u(x, x_\perp, {\mathbf v})=0 & \mbox{for} \quad 
(x, x_\perp, {\mathbf v})\in \Omega_{\mathbf x}\times  \partial\Omega_{\mathbf v}, \\
u(0, x_\perp, {\mathbf v})=u_0(x_\perp, {\mathbf v})  & \mbox{for} \quad 
(x_\perp, {\mathbf v})\in \Omega_{x_\perp}\times \Omega_{\mathbf v}=: \Omega_\perp, \\
u(x, x_\perp, {\mathbf v})=0 & \mbox{on} \quad \Gamma^-_{\tilde\beta}\setminus 
\{(0, x_\perp, {\mathbf v})\}, 
\end{array}
\right .
\end{equation} 
where ${\mathbf v}=(\eta, \xi)$, 
$\nabla{_\perp}=(\partial/\partial y, \partial/\partial z)$ and 
\begin{equation}\label{Fermi7A}
\Gamma^-_{\tilde\beta}:=\{(x, x_\perp,{\mathbf v})\in \partial\Omega, 
{\mathbf n}\cdot\tilde\beta <\,0   \} 
\end{equation} 
is the inflow boundary with respect to the characteristic line 
$\tilde\beta:=(1, {\mathbf v}, 0,0)$ and ${\mathbf n}$ is 
the outward unit normal 
to the boundary $\partial\Omega$. 
Note that, to derive energy estimates, 
the associated boundary data (viewed as a replacement for 
the initial data) at $x=0$ is, in a sense, approximating the product of the 
Dirac's delta functions on the right hand side of \eqref{Fermi2}.  
Assuming that we can use separation of variables, 
we may write the data function $u_0$ as product 
of two functions $f(x_\perp)$ and $g({\mathbf v})$,  
$$
u_0(x_\perp, {\mathbf v})=f(x_\perp) g({\mathbf v}).  
$$
The regularity of these functions have substantial impact in deriving 
theoretical stabilities and are essential in robustness of implemented results. 

\section{The phase-space standard Galerkin  procedure}
Below we introduce a framework that concerns a standard Galerkin 
discretization 
based on a quasi-uniform triangulation of 
the phase-space domain 
$\Omega_\perp:=\Omega_{x_\perp}\times \Omega_{\mathbf v}:=I_\perp\times \Omega_{\mathbf v}$, 
where $ I_\perp : = I_y \times I_z $. 
This is an extension of our  studies in two-dimensions in a flatland model \cite{MA-EL:2007}. 
Previous numerical approaches are mostly devoted to the study of the one-dimensional problem 
see, e.g. \cite{Larsen_etal:85} and \cite{Prinja_Pomraning:96}. 

Here we consider triangulations of the 
rectangular domains $I_\perp$ and $\Omega_{\mathbf v}:=I_\eta\times I_\xi$ 
into triangles $\tau_\perp$ and $\tau_{\mathbf v}$, and 
with the corresponding mesh parameters $h_\perp$ and $h_{\mathbf v}$, 
respectively. Then a general polynomial approximation of degree $\le r$ 
can be formulated in 
${\mathbb P}_r (\tau):
={\mathbb P}_r (\tau_\perp)\otimes {\mathbb P}_r (\tau_{\mathbf v})$. 
These polynomial spaces are more specified in the implementation section. 
We will assume a minimal angle condition on the triangles 
$ \tau_\perp $ and $ \tau_{\mathbf v} $ (see e.g.\ \cite{Brenner}).
Treating the beams entering direction $x$ 
similar to a time variable, we  
let ${\mathbf n}:={\mathbf n}(y,z, {\mathbf v})$ 
be the outward unit normal to the boundary of the phase-space domain 
$\Omega_{x_\perp}\times \Omega_{\mathbf v}$ at 
$(y,z, {\mathbf v})\in \partial\Omega_\perp$ where 
$\partial\Omega_\perp:=(\partial\Omega_{x_\perp}\times\Omega_{\mathbf v} )
\cup (\Omega_{x_\perp}\times \partial\Omega_{\mathbf v})$. 
Now set $\beta:= ({\mathbf v} ,0,0)$ and define the inflow (outflow) boundary 
as 

\begin{equation}\label{Fermi7B}
\Gamma^{-(+)}_\beta:=\{(x_\perp,{\mathbf v})\in \Gamma:= 
\partial\Omega_{\perp}:{\mathbf n}\cdot \beta 
<0 \, (>0)\}. 
\end{equation}
We shall also need an abstract finite element space 
as a subspace of a function space of Sobolev type, viz:
\begin{equation}\label{Fermi7C}
{\mathcal V}_{h,\beta}\subset 
H^1_\beta:=\{w\in H^1(I_\perp\times\Omega_{\mathbf v}): w=0\, \,\mbox{on}\,\, 
\Gamma^{-}_\beta \}.
\end{equation}
Now for all 
$w\in H^1(I_\perp\times\Omega_{\mathbf v})\cap H^r(I_\perp\times\Omega_{\mathbf v})$ 
a classical standard estimate reads as 
\begin{equation}\label{Fermi7D}
\inf_{\chi\in {\mathcal V}_{h,\beta}}
\vert\vert w-\chi\vert\vert_j\le Ch^{\alpha-j}\vert\vert w\vert\vert_\alpha, 
\quad j=1,2, \quad 1\le \alpha\le r\quad\mbox{ and } \, 
h=\max(h_\perp, h_{\mathbf v}).  
\end{equation}
To proceed let $\tilde u$ be an auxiliary interpolant of the solution $u$ 
for the equation \eqref{Fermi7} defined by 
\begin{equation}\label{Fermi7E}
{\mathcal A}( u-\tilde{u} , \chi)_\perp = 0,\,\,  \; \; \forall \; 
\chi \in {\mathcal V}_{h,\beta},
\end{equation}
where 
\begin{equation}\label{Fermi7F}
{\mathcal A}( u, w)_\perp =(u_x, w)_{\Omega_{\perp}} 
+({\mathbf v}\cdot \nabla_\perp u, w)_{\Omega_{\perp}}, 
\end{equation}
and $(\cdot, \cdot)_\perp:= (\cdot, \cdot)_{\Omega_{\perp}}=
(\cdot, \cdot)_{I_\perp\times\Omega_ { \mathbf v}}$. 

With these notation the weak formulation  for 
the problem \eqref{Fermi7} can be written as follows: 
for each $ x \in (0, L] $,
find $ u(x, \cdot ) \in H^1_{\beta}$ such that,
\begin{equation}\label{Fermi7G}
\left\{
\begin{array}{ll}
{\mathcal A}( u, \chi)_\perp+
\frac 12 (\sigma_{tr}\nabla_{\mathbf v} u, \nabla_{\mathbf v}\chi)_\perp=0 \qquad & 
\forall \chi\in H^1_{\beta}, \\
 u(0,x_\perp,{\mathbf v})=u_0(x_\perp,{\mathbf v}) &  \,\mbox{for}\,\,
(x_\perp,{\mathbf v})\in \Gamma^+_\beta, \\
 u(x,x_\perp,{\mathbf v})=0 &  \,\mbox{on}\,\, 
\Gamma^{-}_\beta\setminus\{(0,x_\perp,{\mathbf v})\}.
\end{array}
\right .
\end{equation}

Our objective is to solve the following finite element approximation 
for the problem \eqref{Fermi7G}: for each $ x \in (0, L] $,
find $u_h (x, \cdot)\in {\mathcal V}_{h,\beta}$ such that,
\begin{equation}\label{Fermi7H}
\left\{
\begin{array}{ll}
{\mathcal A}( u_h, \chi)_\perp+
\frac 12(\sigma_{tr}\nabla_{\mathbf v} u_h, \nabla_{\mathbf v}\chi)_\perp=0 \qquad & 
\forall \chi\in {\mathcal V}_{h,\beta}, \\
 u_h(0,x_\perp,{\mathbf v})=u_{0,h}(x_\perp,{\mathbf v})&  \,\mbox{for}\,\,
(x_\perp,{\mathbf v})\in \Gamma^+_\beta, \\
 u_h(x,x_\perp,{\mathbf v})=0 &  \,\mbox{on}\,\, 
\Gamma^{-}_\beta\setminus\{(0,x_\perp,{\mathbf v})\},
\end{array}
\right .
\end{equation}
where $ u_{0,h} (x_\perp, \mathbf v ) = \tilde{u} (0, x_\perp, \mathbf v ) $.

\subsection{A fully discrete scheme}
For a partition of the interval $[0, L]$ into the subintervals 
$I_{m}:=(x_{m-1}, x_m), m=1, 2, \ldots, M$ with 
$k_m:=\vert I_m\vert := x_m-x_{m-1}$, 
a finite element approximation $U$ 
with continuous linear functions $ \psi_m (x) $ on 
$I_m$ 
can be written as:
\begin{equation}\label{Fermi8}
u_h (x, x_\perp, {\mathbf v})=U_{m-1}(x_\perp, {\mathbf v})\psi_{m-1}(x)+
U_{m}(x_\perp, {\mathbf v})\psi_{m}(x), 
\end{equation}
where $x_\perp:=(y, z)$ and 
\begin{equation}\label{Fermi9}
\psi_{m-1}(x)=\frac{x_m-x}{k_m}, \qquad \psi_m (x)=\frac{x-x_{m-1}}{k_m}.  
\end{equation}
Hence, the setting \eqref{Fermi8}-\eqref{Fermi9} 
may be considered for an iterative, e.g. backward Euler,  
 scheme with continuous piecewise linear 
or discontinuous (with jump discontinuities at grid points $x_m$) 
piecewise linear functions for whole $I_x=[0,L]$. 

To proceed we consider a normalized, rectangular domain $\Omega_{\mathbf V}$ 
for the 
velocity variable 
${\mathbf v}$, as $(\eta,\xi)\in [-1,1]\times[-1,1]$ and assume a 
uniform, ``central adaptive'' discretization mesh viz: 
\begin{equation}\label{Fermi10}
\Omega_{\mathbf v}^N:=\left\{{\mathbf v}_{i,j}\subset\Omega_{\mathbf v}\Big\vert 
{\mathbf v}_{i,j}=(\eta_i, \xi_j):=\Big(\sin\frac {i \pi}{ 2n}, 
\sin\frac {j\pi}{ 2n}\Big),\,\, i,j=0, \pm 1,\ldots \pm n \right\}, 
\end{equation}
where $N=(2n+1)^2$. Further we assume that $U$ has 
compact support in $\Omega_{\mathbf V}$. By a standard approach one can 
show that, for each $m=1, 2,\ldots , M$, a finite element or finite 
difference solution $U_m^N$ obtained using the discretization 
\eqref{Fermi10} of the velocity 
domain  $\Omega_{\mathbf v}$, satisfies the $L_2(\Omega_{\mathbf v})$ error estimate 
\begin{equation}\label{Fermi11}
 \vert\vert{U_m-U_m^N}\vert\vert_{L_2(\Omega_{\mathbf v})}\le \frac C{N^2}
\vert\vert{D^2_{\mathbf V}U_m(x_\perp, \cdot)}\vert\vert_{L_2(\Omega_{\mathbf v})}. 
\end{equation}
Now we introduce a final, finite element, discretization using continuous 
piecewise linear basis functions $\varphi_j(x_\perp)$, on a partition 
${\mathcal T}_h$ of the spatial domain $\Omega_{x_\perp}$, on a quasi-uniform 
 triangulation with the mesh parameter $h$ and obtain the fully 
discrete solution $U_m^{N,h}$. We introduce discontinuities on the 
direction of entering beam (on the $x$-direction). 
We also introduce jumps appearing in passing a collision site; say  
$x_m$, as the difference between the values at 
$x_m^-$ and $x_m^+$: 
\begin{equation}\label{Fermi12}
\Big[U_m\Big]:=U_m^+-U_m^-, \qquad 
U_m^{\pm}:=\lim_{s\to 0}U(x_m\pm s, x_\perp, {\mathbf v}). 
\end{equation}
Due to the hyperbolic nature of the problem in $x_\perp$, for 
the solutions in the Sobolev space $H^{k+1}(\cdot, x_\perp,  {\mathbf v})$, 
(see Adams \cite{adams1975} for the exact definitions of the Sobolev norms and spaces) 
 the final finite element approximation yields an $L_2(\Omega_{x_\perp})$ 
error estimate viz, 
\begin{equation}\label{Fermi13}
 \vert\vert{U_m^N-U_m^{N,k}}\vert\vert_{L_2(\Omega_{x_\perp})}\le C h^{k+1/2}
\vert\vert{U_m^N(x_\perp, \cdot)}\vert\vert_{H^{k+1}(\Omega_{x_\perp})}. 
\end{equation}
To be specific, for each $m$ and each ${\mathbf v}_{i,j}\in\Omega_{\mathbf v}$ 
we obtain a spatially continuous version of the equations system \eqref{Fermi7} 
where, for $u$,  we insert  
\begin{equation}\label{Fermi14}
U_m(x_\perp, {\mathbf v}_{i,j})=
\sum_{k=1}^K U_{m,k}({\mathbf v}_{i,j})\varphi_k(x_\perp).
\end{equation}
Thus for each ${\mathbf v}_{i,j}\in\Omega_{\mathbf v}$ a variational formulation
 for a 
{\sl space-time like} discretization in $(x, x_\perp)$ of \eqref{Fermi8} 
reads as follows: find $U\in {\mathcal V}_{h,\beta}$ such that 
\begin{equation}\label{Fermi14}
\begin{split}
\int_{I_m}\int_{\Omega_{x_\perp}} & U_x(x,x_\perp, {\mathbf v}_{i,j})\varphi_k(x_\perp) dx_\perp dx
+\int_{I_m}\int_{\Omega_{x_\perp}}
{\mathbf v}_{i,j}\cdot\nabla_\perp U(x,x_\perp, {\mathbf v}_{i,j})\varphi_k(x_\perp) dx_\perp dx\\
&= \int_{I_m}\int_{\Omega_{x_\perp}} \frac{\sigma_{tr}}2\Delta_ {\mathbf V}
U(x,x_\perp, {\mathbf v}_{i,j})\varphi_k(x_\perp) dx_\perp dx, \qquad \forall \,\, 
\varphi_k\in {\mathcal V}_{h,\beta}, 
\end{split} 
\end{equation}
where 
\begin{equation}\label{Fermi14A}
{\mathcal V}_{h,\beta}:=
\{w\in {\mathcal V}_{\beta}\vert w_{ \lvert_\tau } \in \mathbb{P}_1 ( \tau ) , w \,\, \mbox{is continuous} \}.
\end{equation}
This yields 
\begin{equation}\label{Fermi15}
\begin{split}
\int_{\Omega_{x_\perp}} & 
\Big(U_m(x_\perp, {\mathbf V}_{i,j})-U_{m-1}(x_\perp, {\mathbf v}_{i,j})\Big)
\varphi_k(x_\perp) dx_\perp + 
\frac{k_m}2 \int_{\Omega_{x_\perp}}{\mathbf v}_{i,j}\cdot\nabla_\perp 
U_m(x_\perp, {\mathbf V}_{i,j})\varphi_k(x_\perp) dx_\perp \\
&+
\frac{k_m}2 \int_{\Omega_{x_\perp}}{\mathbf v}_{i,j}\cdot\nabla_\perp 
U_{m-1}(x_\perp, {\mathbf v}_{i,j})\varphi_k(x_\perp) dx_\perp \\
&=\frac{\sigma_{tr}}2\frac{k_m}2 \int_{\Omega_{x_\perp}}\Delta_ {\mathbf v}
U_m(x_\perp, {\mathbf v}_{i,j})\varphi_k(x_\perp) dx_\perp +
\frac{\sigma_{tr}}2\frac{k_m}2 \int_{\Omega_{x_\perp}}\Delta_ {\mathbf v}
U_{m-1}(x_\perp, {\mathbf v}_{i,j})\varphi_k(x_\perp) dx_\perp .
\end{split} 
\end{equation}
Such an equation would lead to a linear system of equations which in 
compact form can be written as the following matrix equation 
\begin{equation}\label{Fermi16}
\begin{split}
MU_m({\mathbf v}_{i,j})& -MU_{m-1}({\mathbf v}_{i,j})  
+\frac{k_m}2 C_{{\mathbf v}_{i,j}} U_m({\mathbf v}_{i,j})+
\frac{k_m}2 C_{{\mathbf v}_{i,j}} U_{m-1}({\mathbf v}_{i,j})\\
&=\frac{\sigma_{tr}}2\frac{k_m}2 \Delta_ {\mathbf v}MU_m({\mathbf v}_{i,j})+
\frac{\sigma_{tr}}2\frac{k_m}2 \Delta_ {\mathbf v}MU_{m-1}({\mathbf v}_{i,j})
\end{split} 
\end{equation}
Now considering ${\mathbf v}$-continuous version of 
\eqref{Fermi16}: 
\begin{equation}\label{Fermi17}
\begin{split}
MU_m({\mathbf v})& -MU_{m-1}({\mathbf v})  
+\frac{k_m}2 C_{{\mathbf V}} U_m({\mathbf v})+
\frac{k_m}2 C_{{\mathbf V}} U_{m-1}({\mathbf v})\\
&=\frac{\sigma_{tr}}2\frac{k_m}2 \Delta_ {\mathbf V}MU_m({\mathbf v})+
\frac{\sigma_{tr}}2\frac{k_m}2 \Delta_ {\mathbf V}MU_{m-1}({\mathbf v}), 
\end{split} 
\end{equation}
we may write 
\begin{equation}\label{Fermi18}
U_m(x_\perp, {\mathbf v})=
\sum_{k=1}^K \sum_{j=1}^J U_{m,k, j}\chi_{j}({\mathbf v})\varphi_k(x_\perp).
\end{equation}
Then a further variational form is obtained by multiplying 
\eqref{Fermi17} by $\chi_{j}$, $j=1,2, \ldots, J$ and integrating over 
$\Omega_{\mathbf V}$: 
\begin{equation}\label{Fermi19}
\begin{split}
&\Big(M_{x_\perp}\otimes M_{\mathbf v}\Big) U_m 
+\frac{k_m}2 \Big(C_{x_\perp}\otimes\widetilde M_{\mathbf v} \Big) U_m
+\frac{\sigma_{tr}}2\frac{k_m}2 \Big(S_{\mathbf v}\otimes M_{x_\perp}\Big)U_m\\
&\qquad =\Big(M_{x_\perp}\otimes M_{\mathbf v} \Big)U_{m-1}-
\frac{k_m}2 \Big(C_{x_\perp}\otimes\widetilde M_{\mathbf v} \Big)U_{m-1}
-\frac{\sigma_{tr}}2\frac{k_m}2 \Big(S_ {\mathbf v}\otimes M_{x_\perp}\Big)U_{m-1}, 
\end{split} 
\end{equation}
where $\otimes$ represents tensor products 
with the obvious notations for the coefficient matrices 
$M_{x_\perp}$, $M_{\mathbf v}$ being the mass-matrices in spatial and velocity 
variables, $C_{x_\perp}$ is the convection matrix in space, 
$\widetilde M_{\mathbf v}:={\mathbf v}\otimes M_{\mathbf v}$ corresponds to the spatial 
convection terms with the coefficient 
${\mathbf v}$: ${\mathbf v}\cdot \nabla_\perp$, and finally 
$S_ {\mathbf v}$ is the stiffness matrix in  ${\mathbf v}$. 

Now, given an initial beam configuration, $U_0=u_0$, our objective is to 
use an iteration algorithm as the finite element version above  
or the corresponding equivalent backward Euler (or Crank-Nicolson) approach 
for discretization in the $x_\perp$ variable, and obtain successive $U_m$-values 
at the subsequent discrete $x_\perp$-levels. To this end the delicate issues of 
an initial data viz \eqref{Fermi2}, as a product of Dirac delta functions, 
as well as the desired dose to the target  that imposes
 the model to be transferred 
to a case having an {\sl inverse problem} nature are challenging 
practicalities.

\subsubsection{Standard stability estimates} 
We use the notion of the scalar products over a domain 
${\mathbf D}$ and its boundary $\partial {\mathbf D}$ as 
$(\cdot, \cdot)_{\mathbf D}$ and 
$\langle \cdot, \cdot \rangle_{\partial {\mathbf D}}$,  
respectively. Here, 
${\mathbf D}$ can be $\Omega:=I_x\times\Omega_{\mathbf x}\times\Omega_{\mathbf v}$,  
$ I_x\times\Omega_{\mathbf x}$, $\Omega_{\mathbf x}\times\Omega_{\mathbf v}$, or 
possibly other relevant domains in the problem. 
Below we state and prove a stability lemma which, in some adequate norms, 
 guarantees the control of the 
solution for the continuous problem by the data. 
The lemma is easily 
extended to the case of approximate solution. 

We derive the stability estimate using the triple norm 
\begin{equation}\label{Fermi19A}
\vert \vert\vert w\vert\vert\vert_\beta^2 = 
 \int_0^L \int_{\Gamma_\beta^+} w^2({\mathbf n}\cdot \beta)\, d\Gamma dx
+\vert\vert \sigma_{tr}^{1/2}\nabla_{\mathbf v} w\vert\vert^2_{L_2 (\Omega)}. 
\end{equation}

\begin{lemma} \label{Lemma1}
For $u$ satisfying \eqref{Fermi7} we have the stability estimates 
\begin{equation}\label{Fermi19A1}
\sup_{x\in I_x}\norm{u(x,\cdot, \cdot)}_{L_2(\Omega_\perp\times\Omega_{\mathbf v})}
\le \norm{u_0(\cdot, \cdot)}_{L_2(\Omega_\perp\times\Omega_{\mathbf v})},
\end{equation}
and 
\begin{equation}\label{Fermi19A2}
\vert \vert\vert u\vert\vert\vert_\beta \le 
\norm{u_0(\cdot, \cdot)}_{L_2(\Omega_\perp\times\Omega_{\mathbf v})}. 
\end{equation}

\end{lemma}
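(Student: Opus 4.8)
The plan is to run the classical energy argument, testing the weak formulation \eqref{Fermi7G} against the solution itself. First I would take $\chi = u(x,\cdot)$ in \eqref{Fermi7G}, which is admissible because $u(x,\cdot)\in H^1_\beta$ by the inflow condition $u=0$ on $\Gamma^-_\beta$. This produces the pointwise-in-$x$ identity
\[
\mathcal{A}(u,u)_\perp + \tfrac12(\sigma_{tr}\nabla_{\mathbf v}u,\nabla_{\mathbf v}u)_\perp = 0 .
\]
Equivalently one multiplies the strong equation in \eqref{Fermi7} by $u$, integrates over the phase-space slice $\Omega_\perp=\Omega_{x_\perp}\times\Omega_{\mathbf v}$, and integrates the Fokker--Planck term by parts in $\mathbf v$; the Neumann condition $\nabla_{\mathbf v}u=0$ on $\Omega_{\mathbf x}\times\partial\Omega_{\mathbf v}$ is exactly what removes the velocity-boundary contribution and recovers the symmetric diffusion term already present in \eqref{Fermi7G}.

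Next I would evaluate the three contributions. The streaming term is an exact derivative, $(u_x,u)_\perp = \tfrac12\,\tfrac{d}{dx}\norm{u(x,\cdot,\cdot)}_{L_2(\Omega_\perp)}^2$. For the transverse convection, since $\mathbf v$ does not depend on $x_\perp$ I would write $(\mathbf v\cdot\nabla_\perp u)\,u=\tfrac12\,\mathbf v\cdot\nabla_\perp(u^2)$ and apply the divergence theorem over $\Omega_\perp$, obtaining the single surface integral $\tfrac12\int_{\partial\Omega_\perp}u^2(\mathbf n\cdot\beta)\,d\Gamma$. Splitting $\partial\Omega_\perp$ into $\Gamma^-_\beta$ and $\Gamma^+_\beta$ and using $u=0$ on $\Gamma^-_\beta$ leaves precisely the non-negative term $\tfrac12\int_{\Gamma^+_\beta}u^2(\mathbf n\cdot\beta)\,d\Gamma$. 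The diffusion term contributes $\tfrac{\sigma_{tr}}2\norm{\nabla_{\mathbf v}u}_{L_2(\Omega_\perp)}^2$, so the identity becomes
\[
\tfrac12\,\tfrac{d}{dx}\norm{u}_{L_2(\Omega_\perp)}^2 + \tfrac12\int_{\Gamma^+_\beta}u^2(\mathbf n\cdot\beta)\,d\Gamma + \tfrac{\sigma_{tr}}2\norm{\nabla_{\mathbf v}u}_{L_2(\Omega_\perp)}^2 = 0 .
\]

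From this balance both estimates follow by integrating in $x$. Integrating from $0$ to an arbitrary $X\le L$ and discarding the two non-negative dissipative terms gives $\norm{u(X,\cdot,\cdot)}_{L_2(\Omega_\perp)}\le\norm{u_0}_{L_2(\Omega_\perp)}$; taking the supremum over $X\in I_x$ yields \eqref{Fermi19A1}. Taking instead $X=L$ and retaining every term produces the full balance $\norm{u(L,\cdot,\cdot)}_{L_2(\Omega_\perp)}^2 + \Tnorm{u}_\beta^2 = \norm{u_0}_{L_2(\Omega_\perp)}^2$, since the two dissipative integrals are exactly the two pieces of the triple norm \eqref{Fermi19A}; dropping the non-negative endpoint term $\norm{u(L,\cdot,\cdot)}_{L_2(\Omega_\perp)}^2$ then gives \eqref{Fermi19A2}.

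I expect the only genuinely delicate point to be the bookkeeping on the phase-space boundary $\partial\Omega_\perp=(\partial\Omega_{x_\perp}\times\Omega_{\mathbf v})\cup(\Omega_{x_\perp}\times\partial\Omega_{\mathbf v})$. One must check that the convection surface integral is supported only on the spatial part, where $\mathbf n\cdot\beta=\mathbf n_\perp\cdot\mathbf v$, because $\beta=(\mathbf v,0,0)$ is tangential to the velocity faces so that $\mathbf n\cdot\beta=0$ there; simultaneously the Fokker--Planck integration by parts generates a boundary term only on the velocity part, where it is annihilated by the Neumann condition. Thus the two boundary conditions in \eqref{Fermi7} act on disjoint faces and cooperate cleanly. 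A secondary caveat is that the manipulations (differentiating under the integral and the two integrations by parts) presuppose enough smoothness of $u$; this matches the remark preceding the lemma that the singular data \eqref{Fermi2} is to be replaced by an $L_2$ regularization $u_0=f\,g$, so the estimate is understood for the correspondingly regular solution.
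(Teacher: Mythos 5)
Your proof is correct and follows essentially the same route as the paper: testing \eqref{Fermi7G} with $\chi=u$, converting the transverse convection term via Green's formula into the non-negative outflow integral $\tfrac12\int_{\Gamma^+_\beta}u^2({\mathbf n}\cdot\beta)\,d\Gamma$, and then integrating in $x$ — over $(0,X)$ with the dissipative terms discarded for \eqref{Fermi19A1}, and over $(0,L)$ keeping all terms to get the exact balance from which \eqref{Fermi19A2} follows. Your extra remarks on the boundary bookkeeping (that $\beta$ is tangential to the velocity faces and the Neumann condition removes the velocity-boundary term) only make explicit what the paper leaves implicit in its weak formulation.
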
 
\begin{proof} 
We let $\chi=u$ in \eqref{Fermi7G} and use \eqref{Fermi7E}-\eqref{Fermi7F} 
to obtain 
\begin{equation}\label{Fermi19B}
\frac 12 \frac {d}{dx}\norm{u}_{L_2(I_\perp\times\Omega_{\mathbf v})}^2 +
({\mathbf v}\cdot\nabla_\perp u, u)_{I_\perp\times\Omega_{\mathbf v} } +
\frac 12 \norm{\sigma_{tr}^{1/2}\nabla_{\mathbf v}u}_{L_2(I_\perp\times\Omega_{\mathbf v})}^2 = 0, 
\end{equation}
where using Green's formula and with $\beta=({\mathbf v},0)$ we have 
\begin{equation}\label{Fermi19B1}
\begin{split}
({\mathbf v}\cdot\nabla_\perp u, u)_{I_\perp\times\Omega_{\mathbf v} }& =
\int_{\Omega_\mathbf v}\Big(\int_{I_\perp} 
({\mathbf v}\cdot\nabla_\perp u)u\, \Big) dx_\perp  \, d{\mathbf v}\\
&=
\frac 12 \int_{\Omega_{\mathbf v}} ({\mathbf n}\cdot{\mathbf v}) u^2\, d{\mathbf v}
=\frac 12 \int_{\Gamma_\beta^+} u^2({\mathbf n}\cdot \beta)\, d\Gamma\ge 0.
\end{split}
\end{equation}
Thus 
\begin{equation}\label{Fermi19B2}
\frac {d}{dx}\norm{u}_{L_2(I_\perp\times\Omega_{\mathbf v})}^2 \le 0,
\end{equation}
which yields \eqref{Fermi19A1} after integration over $ (0, x)$ and taking supremum over $ x \in I_x $. 
Integrating \eqref{Fermi19B} over $x\in (0, L)$ and using \eqref{Fermi19B1}
together with the definition of the triple norm $ ||| \cdot |||_\beta $ we get 
\begin{equation}\label{Fermi19B3}
\norm{u(L,\cdot,\cdot)}_{L_2(I_\perp\times\Omega_{\mathbf v})}^2
+ ||| u |||_{\beta}^2
=\norm{u(0,\cdot,\cdot)}_{L_2(I_\perp\times\Omega_{\mathbf v})}^2 
\end{equation}
and the estimate \eqref{Fermi19A2} follows.
\end{proof} 

Using the same argument as above we obtain the semi-discrete version of the 
stability Lemma \ref{Lemma1}:
\begin{corollary}
The semi-discrete 
solution $u_h$ with $h=\max (h_\perp, h_{\mathbf v})$ and standard Galerkin 
approximation in {\sl phase-space} $I_\perp\times\Omega_{\mathbf v}$ 
satisfies the semidiscrete stability estimates: 
\begin{equation}\label{Fermi19B4}
\sup_{x\in I_x}\norm{u_h(x,\cdot, \cdot)}_{L_2(\Omega_\perp\times\Omega_{\mathbf v})}
\le \norm{u_{0,h}(\cdot, \cdot)}_{L_2(\Omega_\perp\times\Omega_{\mathbf v})},
\end{equation}
\begin{equation}\label{Fermi19B5}
\vert \vert\vert u_h\vert\vert\vert_\beta \leq 
\norm{u_{0,h}(\cdot, \cdot)}_{L_2(\Omega_\perp\times\Omega_{\mathbf v})}. 
\end{equation}

\end{corollary}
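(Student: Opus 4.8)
The plan is to reproduce the argument of Lemma~\ref{Lemma1} almost verbatim, the only structural input being the conformity of the discrete space, $\mathcal{V}_{h,\beta}\subset H^1_\beta$. Because the scheme \eqref{Fermi7H} is only semi-discrete---the phase space $I_\perp\times\Omega_{\mathbf v}$ is discretized while the penetration variable $x$ is kept continuous---the approximation $u_h(x,\cdot,\cdot)$ is differentiable in $x$ and carries no jump terms, so the continuous energy method transcribes directly. First I would test \eqref{Fermi7H} against $\chi=u_h$ itself; this is legitimate precisely because $u_h(x,\cdot,\cdot)\in\mathcal{V}_{h,\beta}$ for each fixed $x$, so the discrete solution is an admissible test function. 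Inserting the definition \eqref{Fermi7F} of $\mathcal{A}$ then isolates the term $(\partial_x u_h,u_h)_\perp=\tfrac12\tfrac{d}{dx}\norm{u_h}^2_{L_2(I_\perp\times\Omega_{\mathbf v})}$, exactly as in the continuous identity \eqref{Fermi19B}.

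The key step is the treatment of the convection term $({\mathbf v}\cdot\nabla_\perp u_h,u_h)_\perp$. I would apply the same Green's formula as in \eqref{Fermi19B1}, writing $({\mathbf v}\cdot\nabla_\perp u_h)\,u_h=\tfrac12{\mathbf v}\cdot\nabla_\perp(u_h^2)$ and integrating by parts over $I_\perp$ for each fixed ${\mathbf v}$. The decisive point is that $u_h$ vanishes on the inflow boundary $\Gamma_\beta^-$: this is guaranteed both by the discrete boundary condition in \eqref{Fermi7H} and by the membership $u_h\in\mathcal{V}_{h,\beta}\subset H^1_\beta$. Hence the inflow contribution drops out and only the nonnegative outflow term $\tfrac12\int_{\Gamma_\beta^+}u_h^2({\mathbf n}\cdot\beta)\,d\Gamma\ge 0$ survives, just as in the continuous case.

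Assembling these pieces yields the discrete energy identity
\begin{equation*}
\frac12\frac{d}{dx}\norm{u_h}^2_{L_2(I_\perp\times\Omega_{\mathbf v})}
+\frac12\int_{\Gamma_\beta^+}u_h^2({\mathbf n}\cdot\beta)\,d\Gamma
+\frac12\norm{\sigma_{tr}^{1/2}\nabla_{\mathbf v}u_h}^2_{L_2(I_\perp\times\Omega_{\mathbf v})}=0,
\end{equation*}
whence $\tfrac{d}{dx}\norm{u_h}^2\le 0$. Integrating over $(0,x)$ and taking the supremum over $x\in I_x$ gives \eqref{Fermi19B4}; integrating over $(0,L)$ and invoking the definition of the triple norm $|||\cdot|||_\beta$ produces \eqref{Fermi19B5}. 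In both right-hand sides the continuous datum $u_0$ is simply replaced by its discrete counterpart $u_{0,h}$, which is the only place the discretization of the initial data enters.

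I do not anticipate a genuine obstacle here: the conformity $\mathcal{V}_{h,\beta}\subset H^1_\beta$ makes the discrete computation formally identical to the continuous one, and no inverse estimates or mesh-dependent constants are needed. The only point requiring care is bookkeeping on the boundary---verifying that the homogeneous inflow condition holds for $u_h$ so that the $\Gamma_\beta^-$ integral vanishes---after which the coercivity furnished by the velocity-diffusion term and the sign of the outflow term close both estimates exactly as in Lemma~\ref{Lemma1}.
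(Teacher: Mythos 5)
Your proposal is correct and is precisely the argument the paper intends: the paper itself proves the corollary only by remarking that it follows ``using the same argument as above,'' i.e.\ by repeating the energy method of Lemma~\ref{Lemma1} with $\chi=u_h$, which is admissible thanks to the conformity $\mathcal{V}_{h,\beta}\subset H^1_\beta$. Your treatment of the convection term via Green's formula, the vanishing inflow contribution, and the replacement of $u_0$ by $u_{0,h}$ matches the paper's route exactly.
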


\subsubsection{Convergence} 

Below we state and prove an a priori error estimate for the finite element
approximation $ u_h $ satisfying \eqref{Fermi7H}. The a priori error estimate
will be stated in the triple norm defined by \eqref{Fermi19A}.

\begin{lemma}\label{aprioriLemma} 
[An a priori error estimate in the triple norm] 
Assume that $u$ and $u_h$ satisfy the continuous and discrete problems 
\eqref{Fermi7G} and \eqref{Fermi7H}, respectively. Let 
$u\in H^r(\Omega)= H^r( \Omega_{\mathbf x}\times \Omega_{\mathbf v})$, 
$r\ge 2$, then there is a 
constant $C$ independent of $ v$, $u$ and $h$ such that 
\begin{equation}\label{Fermi20A}
\Tnorm{u-u_h}_{\tilde\beta}\le Ch^{r-1/2}\norm{u}_{H^r(\Omega)}.
\end{equation}
\end{lemma}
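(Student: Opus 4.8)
The plan is to combine an error splitting based on the auxiliary interpolant $\tilde u$ of \eqref{Fermi7E} with the energy argument already used in Lemma \ref{Lemma1}; the whole point of defining $\tilde u$ through the non-coercive convection form $\mathcal{A}$ is that it removes the transport part of the error equation and leaves only the coercive velocity-diffusion term to drive the estimate. First I would write $u-u_h=(u-\tilde u)+(\tilde u-u_h)=:\rho+\theta$ with $\theta\in\mathcal{V}_{h,\beta}$, and record the Galerkin orthogonality obtained by subtracting \eqref{Fermi7H} from \eqref{Fermi7G} for test functions $\chi\in\mathcal{V}_{h,\beta}$,
\begin{equation*}
\mathcal{A}(u-u_h,\chi)_\perp+\tfrac12(\sigma_{tr}\nabla_{\mathbf v}(u-u_h),\nabla_{\mathbf v}\chi)_\perp=0 .
\end{equation*}

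Inserting the splitting and using the defining relation \eqref{Fermi7E}, namely $\mathcal{A}(\rho,\chi)_\perp=0$ for every $\chi\in\mathcal{V}_{h,\beta}$, the convection contribution of $\rho$ drops out and I am left with the reduced identity
\begin{equation*}
\mathcal{A}(\theta,\chi)_\perp+\tfrac12(\sigma_{tr}\nabla_{\mathbf v}\theta,\nabla_{\mathbf v}\chi)_\perp=-\tfrac12(\sigma_{tr}\nabla_{\mathbf v}\rho,\nabla_{\mathbf v}\chi)_\perp .
\end{equation*}
Choosing $\chi=\theta$ and repeating the Green's-formula computation \eqref{Fermi19B}--\eqref{Fermi19B1}, the convection term produces $\tfrac12\frac{d}{dx}\norm{\theta}^2+\tfrac12\int_{\Gamma_\beta^+}\theta^2(\mathbf{n}\cdot\beta)\,d\Gamma$, while the choice $u_{0,h}=\tilde u(0,\cdot)$ forces $\theta(0,\cdot)=0$. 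Integrating over $x\in(0,L)$ then gives the energy identity
\begin{equation*}
\tfrac12\norm{\theta(L,\cdot,\cdot)}^2+\tfrac12\Tnorm{\theta}_\beta^2=-\tfrac12\int_0^L(\sigma_{tr}\nabla_{\mathbf v}\rho,\nabla_{\mathbf v}\theta)_\perp\,dx ,
\end{equation*}
and a Cauchy--Schwarz bound of the right-hand side followed by Young's inequality, absorbing the factor $\norm{\sigma_{tr}^{1/2}\nabla_{\mathbf v}\theta}$ into the left-hand side, leaves both $\norm{\theta(L,\cdot,\cdot)}$ and $\Tnorm{\theta}_\beta$ controlled by $C\norm{\sigma_{tr}^{1/2}\nabla_{\mathbf v}\rho}_{L_2(\Omega)}$, which is precisely what the norm $\Tnorm{\cdot}_{\tilde\beta}$ in \eqref{Fermi20A} collects.

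It remains to estimate the interpolation error $\rho=u-\tilde u$ in the two ingredients of the triple norm \eqref{Fermi19A}: the velocity-gradient term $\norm{\sigma_{tr}^{1/2}\nabla_{\mathbf v}\rho}_{L_2(\Omega)}$ and the inflow/outflow boundary term $\int_0^L\int_{\Gamma_\beta^+}\rho^2(\mathbf{n}\cdot\beta)\,d\Gamma\,dx$. The volume term I would control directly from the approximation property \eqref{Fermi7D} with $j=1$ and $\alpha=r$. For the boundary term I would combine a trace inequality with \eqref{Fermi7D}, using the scaling $\norm{\rho}_{L_2(\Gamma)}^2\le C\big(h^{-1}\norm{\rho}_{L_2(\Omega)}^2+h\,\norm{\rho}_{H^1(\Omega)}^2\big)$; together with the $L_2$- and $H^1$-interpolation bounds this is exactly the step where the nominal order is reduced by a half power, producing a boundary contribution of size $Ch^{r-1/2}\norm{u}_{H^r(\Omega)}$ that is characteristic of this convection-dominated regime and matches the half-power already seen in \eqref{Fermi13}. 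Assembling the two pieces through the triangle inequality $\Tnorm{u-u_h}_{\tilde\beta}\le\Tnorm{\rho}_{\tilde\beta}+\Tnorm{\theta}_{\tilde\beta}$ then delivers \eqref{Fermi20A}.

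The main obstacle is the interpolation analysis of $\rho=u-\tilde u$ itself: because $\tilde u$ is not a nodal interpolant but is defined implicitly through the non-coercive transport form $\mathcal{A}$ (in which $\mathbf v$ enters only as a parameter), one must first verify that $\tilde u$ inherits the approximation orders of \eqref{Fermi7D} --- for instance by bounding the $\mathcal{A}$-projection error by the best approximation error for the pure transport problem in $(x,x_\perp)$ at fixed $\mathbf v$ --- before one may differentiate in $\mathbf v$ to reach $\nabla_{\mathbf v}\rho$ and take traces on $\Gamma_\beta^+$. Handling the boundary term with uniform trace and inverse constants is where the minimal-angle condition and the quasi-uniformity of the triangulations $\tau_\perp$, $\tau_{\mathbf v}$ are needed, and pinning down the sharp exponent $r-\tfrac12$ there is the delicate point of the argument.
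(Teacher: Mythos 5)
Your energy argument is exactly the paper's proof: the same splitting $u-u_h=\rho+\theta$ built on the $\mathcal{A}$-projection $\tilde u$ of \eqref{Fermi7E}, the same use of Galerkin orthogonality to remove the convection part of $\rho$, the same choice $\chi=\theta$ with the Green's-formula computation of Lemma \ref{Lemma1}, the vanishing initial term from $u_{0,h}=\tilde u(0,\cdot,\cdot)$, and the absorption step leading to $\Tnorm{\theta}_{\tilde\beta}\le C\norm{\sigma_{tr}^{1/2}\nabla_{\mathbf v}\rho}_{L_2(\Omega)}$; all of this matches \eqref{Fermi20A1}--\eqref{Fermi20A5}. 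The divergence is in the final interpolation step, and there your proposal has a genuine gap.

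The gap: for the velocity-diffusion part of the triple norm you bound $\norm{\sigma_{tr}^{1/2}\nabla_{\mathbf v}\rho}$ using \eqref{Fermi7D} with $j=1$, $\alpha=r$, which gives only $\norm{\sigma_{tr}^{1/2}}_\infty\,Ch^{r-1}\norm{u}_r$. Since $h^{r-1}$ is \emph{larger} than $h^{r-1/2}$ as $h\to 0$, this volume term --- not the boundary term --- is the bottleneck, and your assembled estimate is $O(h^{r-1})$, which does not yield \eqref{Fermi20A}. You have the roles reversed: the boundary contribution is naturally of order $h^{r-1/2}$ (your trace-inequality computation, or the paper's estimate \eqref{InterpolII}), whereas the missing half power for the volume term must come from the weight $\sigma_{tr}^{1/2}$. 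This is precisely why the paper's interpolation Proposition carries the hypothesis $h^2\le\sigma_{tr}({\mathbf x})\le h$: with $\sigma_{tr}\le h$ one has $\norm{\sigma_{tr}^{1/2}}_\infty\le h^{1/2}$, upgrading $h^{r-1}$ to $h^{r-1/2}$ in \eqref{InterpolIA}. Your proposal never invokes any relation between $\sigma_{tr}$ and $h$, so the claimed rate cannot follow from it. On the other hand, your closing caveat is well taken and in fact points at a looseness in the paper itself: the Proposition is proved for the nodal interpolant $\pi_h u$, while the lemma needs the bound for the $\mathcal{A}$-projection $\tilde u$; the paper applies it to $\tilde u$ without comment, whereas you at least flag that this transfer requires an argument.
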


\begin{proof}
 Taking the first equations in \eqref{Fermi7G} and \eqref{Fermi7H} and using 
\eqref{Fermi7F} we end up with 
\begin{equation}\label{Fermi20A1}
\begin{split}
\Big((u_h-\tilde u)_x, \chi\Big)_{I_\perp\times\Omega_{\mathbf v}} &+ 
\Big({\mathbf v}\cdot 
\nabla_{\perp}(u_h-\tilde u), \chi\Big)_{I_\perp\times\Omega_{\mathbf v}} 
+\frac 12 \Big(\sigma_{tr}\nabla_{\mathbf v}
(u_h-\tilde u), \nabla_{\mathbf v}\chi\Big)_{I_\perp\times\Omega_{\mathbf v}}\\
& =\frac 12\Big(\sigma_{tr} \nabla_{\mathbf v}(u-\tilde u), \nabla_{\mathbf v}\chi\Big)_{I_\perp\times\Omega_{\mathbf v}}. 
\end{split}
\end{equation}
Let now $\chi=u_h-\tilde u$, then by the same argument as in the proof of 
Lemma \ref{Lemma1} we get 
\begin{equation}\label{Fermi20A2}
\begin{split}
& \frac {d}{dx}
\norm{u_h-\tilde u}^2_{L_2(I_\perp\times\Omega_{\mathbf v})}+
\int_{\Gamma_\beta^+}({\mathbf n}\cdot\beta) (u_h-\tilde u)^2\, d\Gamma+
\norm{\sigma_{tr}^{1/2}
\nabla_{\mathbf v}(u_h-\tilde u)}^2_{L_2(I_\perp\times\Omega_{\mathbf v})}\\
&\quad\le \frac 12\norm{\sigma_{tr}^{1/2}
\nabla_{\mathbf v}(u_h-\tilde u)}^2_{L_2(I_\perp\times\Omega_{\mathbf v})}
+\frac 12\norm{\sigma_{tr}^{1/2}
\nabla_{\mathbf v}(u-\tilde u)}^2_{L_2(I_\perp\times\Omega_{\mathbf v})}, 
\end{split}
\end{equation}
which yields 
\begin{equation}\label{Fermi20A3}
\begin{split}
 \frac {d}{dx}
\norm{u_h-\tilde u}^2_{L_2(I_\perp\times\Omega_{\mathbf v})} &+
\int_{\Gamma_\beta^+}({\mathbf n}\cdot\beta) (u_h-\tilde u)^2\, d\Gamma+
\frac 12\norm{\sigma_{tr}^{1/2}
\nabla_{\mathbf v}(u_h-\tilde u)}^2_{L_2(I_\perp\times\Omega_{\mathbf v})}\\
&\le 
\frac 12 \norm{\sigma_{tr}^{1/2}
\nabla_{\mathbf v}(u-\tilde u)}^2_{L_2(I_\perp\times\Omega_{\mathbf v})}.  
\end{split}
\end{equation}
Hence, integrating over $x\in (0,L)$ we obtain 
\begin{equation}\label{Fermi20A4}
\begin{split}
&\norm{(u_h-\tilde u)(L,\cdot,\cdot)}^2_{L_2(I_\perp\times\Omega_{\mathbf v})} + 
\int_{\Gamma_\beta^+\setminus \Gamma_L}
(\tilde{\mathbf n}\cdot\tilde\beta) (u_h-\tilde u)^2\, d\Gamma + 
\frac 12\norm{\sigma_{tr}^{1/2} 
\nabla_{\mathbf v}(u_h-\tilde u)}^2_{L_2(I_x\times I_\perp\times\Omega_{\mathbf v})}\\
&\qquad  \le 
\frac 12\norm{\sigma_{tr}^{1/2}
\nabla_{\mathbf v}(u-\tilde u)}^2_{L_2(I_x\times I_\perp\times\Omega_{\mathbf v})}+
 \norm{(u_h-\tilde u)(0,\cdot,\cdot)}^2_{L_2(I_\perp\times\Omega_{\mathbf v})}, 
\end{split}
\end{equation}
where 
$\Gamma_\beta^+\setminus \Gamma_L:=\{\{L\}\times I_\perp\times\Omega_{\mathbf v}\}$. 
Now recalling that $u_h(0,\cdot,\cdot)=\tilde u(0,\cdot,\cdot)=u_{0,h}$ 
and  the definition of $\Tnorm{\cdot}_{\tilde\beta}$ we end up with 
\begin{equation}\label{Fermi20A5}
\Tnorm{u_h-\tilde u}_{\tilde\beta}\le 
\norm{\sigma_{tr}^{1/2}
\nabla_{\mathbf v}(u-\tilde u)}^2_{L_2(I_x\times I_\perp\times\Omega_{\mathbf v})}.
\end{equation}
Finally using the identity 
$u_h-u=(u_h-\tilde u)+ (\tilde u-u)$ and the interpolation estimate below 
we obtain the desired result. 
\end{proof}

\begin{proposition}[See \cite{Ciarlet:80}] 
Let $h^2\le \sigma_{tr}({\mathbf x})\le h$, 
then there is an interpolation constant 
$\tilde C$ such that 
\begin{equation}\label{Fermi20A6}
\Tnorm{u-\tilde u}_{\tilde\beta}\le \tilde C h^{r-1/2}\norm{u}_r.
 \end{equation}
\end{proposition}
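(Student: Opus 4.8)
The plan is to estimate the two contributions to $\Tnorm{u-\tilde u}_{\tilde\beta}^2$ separately, following the definition \eqref{Fermi19A}, and to observe that both happen to scale like $h^{2r-1}$, so that taking the square root produces the claimed rate $h^{r-1/2}$. Throughout I write $e:=u-\tilde u$ for the projection error, and I treat $\tilde u$, defined by the transport projection \eqref{Fermi7E}, as an admissible near-best approximation so that the classical estimate \eqref{Fermi7D} (together with its $L_2$, i.e.\ $j=0$, companion from the same Ciarlet interpolation theory) applies to $e$; establishing this quasi-optimality is the one point that needs care, and I return to it at the end.

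For the diffusive part $\norm{\sigma_{tr}^{1/2}\nabla_{\mathbf v}e}^2_{L_2(\Omega)}$ I would pull the weight out using the upper bound $\sigma_{tr}\le h$, reducing it to $h\,\norm{\nabla_{\mathbf v}e}^2_{L_2(\Omega)}\le h\,\norm{e}_1^2$. Applying \eqref{Fermi7D} with $j=1$, $\alpha=r$ gives $\norm{e}_1\le Ch^{r-1}\norm{u}_r$, so this term is bounded by $h\cdot C^2h^{2(r-1)}\norm{u}_r^2=C^2h^{2r-1}\norm{u}_r^2$. Here only the hypothesis $\sigma_{tr}\le h$ is used, and the extra half power is supplied by the weight $\sigma_{tr}^{1/2}$ itself.

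For the boundary part $\int_0^L\int_{\Gamma_\beta^+}e^2({\mathbf n}\cdot\beta)\,d\Gamma\,dx$ I would first bound the factor ${\mathbf n}\cdot\beta$ by a constant, since $\beta=({\mathbf v},0,0)$ with ${\mathbf v}$ ranging over the bounded velocity domain, reducing the term to the $L_2$ norm of $e$ over the lateral space-time boundary. The standard device is an elementwise trace-interpolation inequality of the form $\norm{e}^2_{L_2(\partial\tau)}\le C\big(h^{-1}\norm{e}^2_{L_2(\tau)}+h\,\norm{\nabla e}^2_{L_2(\tau)}\big)$; inserting the $L_2$ and $H^1$ interpolation bounds $\norm{e}_{L_2(\tau)}\le Ch^{r}\norm{u}_{H^r(\tau)}$ and $\norm{\nabla e}_{L_2(\tau)}\le Ch^{r-1}\norm{u}_{H^r(\tau)}$ makes each of the two terms scale as $h^{2r-1}$, and summing over all elements of the triangulation and integrating in $x$ preserves this rate. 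Thus the boundary contribution is again $\le C h^{2r-1}\norm{u}_r^2$, with the half power now coming from the trace inequality rather than from a weight.

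Combining the two bounds gives $\Tnorm{e}_{\tilde\beta}^2\le \tilde C^2 h^{2r-1}\norm{u}_r^2$, and taking square roots yields \eqref{Fermi20A6}. The main obstacle I anticipate is not the power counting, which is routine once the two separate mechanisms for the half-power loss are identified, but rather justifying that the transport projection $\tilde u$ of \eqref{Fermi7E} inherits the best-approximation rates \eqref{Fermi7D}: because $\mathcal{A}(\cdot,\cdot)_\perp$ is a convection (non-coercive, non-symmetric) form, one cannot simply invoke C\'ea's lemma, and one must either prove a quasi-optimality estimate for this projection, using the transport-energy coercivity exhibited in the proof of Lemma \ref{Lemma1}, or compare $\tilde u$ with the nodal interpolant and control the discrete difference by an inverse inequality. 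Securing the optimal $L_2$ bound $\norm{e}_{L_2}\le Ch^r\norm{u}_r$ is essential here (note that \eqref{Fermi7D} as stated only covers $j=1,2$), since the weaker bound obtained from $\norm{e}_0\le\norm{e}_1$ would degrade the trace estimate to the suboptimal rate $h^{2r-3}$.
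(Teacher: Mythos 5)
Your power counting coincides with the paper's own proof: there too the triple norm \eqref{Fermi19A} is split into the boundary and the weighted-gradient contributions, the gradient term is bounded by $\norm{\sigma_{tr}^{1/2}}_\infty^2\norm{u-\pi_hu}_{H^1(\Omega)}^2$ together with the $H^1$ interpolation estimate (your use of $\sigma_{tr}\le h$), and the boundary term is bounded at the rate $h^{2r-1}$. There are two differences worth noting. First, for the boundary term the paper does not run the elementwise trace argument you sketch; it simply cites the classical boundary interpolation estimate $\abs{u-\pi_hu}_{\tilde\beta}\le C_2h^{r-1/2}\norm{u}_r$ (its \eqref{InterpolII}, from \cite{Brenner}, \cite{Ciarlet:80}) alongside \eqref{InterpolI} for $s=0,1$ — note that the paper's proof does state the $s=0$ case, supplying the optimal $L_2$ bound that, as you correctly observe, \eqref{Fermi7D} alone does not provide and without which the trace argument degrades to $h^{2r-3}$. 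Your derivation is essentially the standard proof of the estimate the paper cites, so this difference is one of detail, not substance. Second, and more importantly, the obstacle you flag at the end — that $\tilde u$ is the transport projection \eqref{Fermi7E} rather than a nodal interpolant, so that best-approximation rates for it require a quasi-optimality argument — is not addressed in the paper at all: its proof silently replaces $\tilde u$ by the nodal interpolant $\pi_h u\in{\mathcal V}_{h,\tilde\beta}$ and establishes the bound for $\Tnorm{u-\pi_hu}_{\tilde\beta}$, leaving the identification of $\tilde u$ with $\pi_hu$ (or a comparison between them) implicit. So your proposal is correct where the paper's proof is correct, and the one step you single out as needing care is precisely the step the paper elides.
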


\begin{proof}
We rely on classical interpolation error estimates 
(see \cite{Brenner} and \cite{Ciarlet:80}): Let $u\in H^r(\Omega)$, then 
there exists interpolation constants $C_1$ and $C_2$ such that for the nodal 
interpolant $\pi_h\in {\mathcal V}_{h, \tilde\beta}$ of $u$ 
we have the interpolation error estimates 
\begin{align}
\norm{u-\pi_hu}_s & \le C_1h^{r-s}\norm{u}_r,\qquad s=0,\,\,1 \label{InterpolI}\\
\abs{u-\pi_hu}_{\tilde\beta} & \le C_2h^{r-1/2}\norm{u}_r,\label{InterpolII}
\end{align}
where 
$$
\abs{\varphi}_{\tilde\beta}:=
\Big(\int_{\Gamma_{\tilde\beta}}\varphi^2 ({\mathbf n}\cdot \tilde\beta)\, d\Gamma 
\Big)^{1/2}.
$$
Using the definition of the triple-norm we have that 
\begin{equation}\label{InterpolIA}
\begin{split}
\Tnorm{u-\pi_hu}_{\tilde\beta}^2 = &
 \abs{u-\pi_hu}_{\tilde\beta}^2+
\norm{\sigma_{tr}^{1/2}\nabla_{\mathbf v}(u-\pi_hu)}^2\\
&\le  \abs{u-\pi_hu}_{\tilde\beta}^2
+\norm{\sigma_{tr}^{1/2}}_\infty^2\norm{u-\pi_hu}_{H^1(\Omega)}^2\\
&\le C_2^2 h^{2r-1}\norm{u}_r^2+
C_1^2\sup_{{\mathbf x}}\abs{\sigma_{tr}} h^{2r-1}\norm{u}_r^2\\
=& 
\Big( C_2^2 +C_1^2\sup_{{\mathbf x}}\abs{\sigma_{tr}}\Big) 
h^{2r-1}\norm{u}_r^2
\end{split} 
\end{equation}
where  
in the last inequality we have used \eqref{InterpolI} and 
\eqref{InterpolII}. Now choosing the constant 

$\tilde C=\Big( C_2^2 +C_1^2\sup_{{\mathbf x}}\abs{\sigma_{tr}}\Big)^{1/2}$ 
we get the desired result. 
\end{proof}

This proposition yields the $L_2$ error estimate viz:

\begin{theorem}[$L_2$ error estimate]
For $u\in H^r(\Omega)$ and $u_h\in {\mathcal V}_{h,\tilde\beta}$ satisfying 
\eqref{Fermi7G} and \eqref{Fermi7H}, respectively, and with 
$h^2\le \sigma_{tr}\le h$, 
 we have that there is a constant 
$C=C(\Omega, f)$ such that 
\begin{equation}\label{L2Estimate}
\norm{u-u_h}_{L_2(\Omega)}\le C h^{r-3/2} 
\norm{u}_r. 
\end{equation}
\end{theorem}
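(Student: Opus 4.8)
The plan is to read the $L_2$ bound off directly from the triple-norm a priori estimate of Lemma \ref{aprioriLemma}, by converting the weighted velocity-gradient control encoded in $\Tnorm{\cdot}_{\tilde\beta}$ into control of the $L_2$ norm of the error itself. First I would note that, by the definition \eqref{Fermi19A} of the triple norm, the velocity-gradient contribution is dominated,
\[
\norm{\sigma_{tr}^{1/2}\nabla_{\mathbf v}(u-u_h)}^2_{L_2(\Omega)}\le \Tnorm{u-u_h}^2_{\tilde\beta}\le C^2h^{2r-1}\norm{u}_r^2,
\]
the last inequality being Lemma \ref{aprioriLemma}. Next I would use the lower bound in the standing hypothesis $h^2\le \sigma_{tr}({\mathbf x})\le h$ to strip the weight: since $\sigma_{tr}^{1/2}\ge h$ pointwise we have $h^2\norm{\nabla_{\mathbf v}(u-u_h)}^2_{L_2(\Omega)}\le \norm{\sigma_{tr}^{1/2}\nabla_{\mathbf v}(u-u_h)}^2_{L_2(\Omega)}$, whence
\[
\norm{\nabla_{\mathbf v}(u-u_h)}_{L_2(\Omega)}\le h^{-1}\Tnorm{u-u_h}_{\tilde\beta}\le Ch^{r-3/2}\norm{u}_r.
\]
This single factor $h^{-1}$ is precisely the loss that turns the triple-norm rate $h^{r-1/2}$ into the asserted rate $h^{r-3/2}$, which is why the hypothesis is stated with the floor $\sigma_{tr}\ge h^2$.

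It then remains to pass from the velocity gradient to the $L_2$ norm of the error. Here I would invoke a Poincar\'e (Friedrichs) inequality in the velocity variable, $\norm{w}_{L_2(\Omega_{\mathbf v})}\le C_P\norm{\nabla_{\mathbf v}w}_{L_2(\Omega_{\mathbf v})}$, applied to the slice $w=(u-u_h)(x,x_\perp,\cdot)$ for a.e.\ $(x,x_\perp)\in I_x\times I_\perp$ and then integrated over $I_x\times I_\perp$. This yields $\norm{u-u_h}_{L_2(\Omega)}\le C_P\norm{\nabla_{\mathbf v}(u-u_h)}_{L_2(\Omega)}$, and combining with the previous display gives the claim with $C=C(\Omega,f)$.

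The hard part will be justifying this last inequality, since the velocity boundary condition in \eqref{Fermi7} is the homogeneous Neumann condition $\nabla_{\mathbf v}u=0$ on $\partial\Omega_{\mathbf v}$, for which the bare Poincar\'e inequality fails because the constants lie in the kernel of $\nabla_{\mathbf v}$. I would close this gap using the compact-support-in-velocity assumption imposed after \eqref{Fermi10}: since $u$, and with it the auxiliary interpolant $\tilde u$ and the discrete solution $u_h$, are taken to vanish in a neighbourhood of $\partial\Omega_{\mathbf v}$, the difference $u-u_h$ has vanishing velocity trace and a Friedrichs inequality applies with a constant depending only on $\Omega_{\mathbf v}$; the dependence of the final constant on the transverse data $f$ is absorbed through this support region and through $\norm{u}_r$. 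Should the support assumption be deemed too strong, an alternative is to split off the velocity mean $\bar w(x,x_\perp):=\abs{\Omega_{\mathbf v}}^{-1}\int_{\Omega_{\mathbf v}}w\,d{\mathbf v}$, apply the Poincar\'e--Wirtinger inequality to $w-\bar w$, and control $\bar w$ separately from the inflow data; this route is more delicate, and I expect the compact-support argument to be the one intended.
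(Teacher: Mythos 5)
Your proof follows essentially the same route as the paper's: the triple-norm a priori estimate of Lemma \ref{aprioriLemma}, the lower bound $\sigma_{tr}\ge h^2$ used to strip the weight at the cost of exactly one factor of $h^{-1}$, and a Poincar\'e inequality in the velocity variable to pass from $\norm{\nabla_{\mathbf v}(u-u_h)}_{L_2(\Omega)}$ to $\norm{u-u_h}_{L_2(\Omega)}$ (the paper merely applies these steps in the opposite order, which is immaterial). If anything you are more careful than the paper, which invokes $\norm{u-u_h}_{L_2(\Omega)}\le C\norm{\nabla_{\mathbf v}(u-u_h)}_{L_2(\Omega)}$ without comment; your observation that the velocity boundary condition in \eqref{Fermi7} is Neumann and the velocity boundary is characteristic for $\beta$, so that the inequality must be justified via the compact-support-in-velocity assumption made after \eqref{Fermi10}, addresses a point the paper leaves implicit.
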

\begin{proof}
Using the Poincar\'e inequality 
\begin{equation}\label{L2EstimateA}
\norm{u-u_h}_{L_2(\Omega)}\le C \norm{\nabla_{\mathbf v}( u-u_h)}_{L_2(\Omega)}
\le \frac C{\min \sigma_{tr}^{1/2}}\norm{ \sigma_{tr}^{1/2}\nabla_{\mathbf v}( u-u_h)}_{L_2(\Omega)}
\end{equation}
Further using Lemma \ref{aprioriLemma}  
\begin{equation}\label{L2EstimateB}
\norm{\sigma_{tr}^{1/2}\nabla_{\mathbf v}(u-u_h)}\le \Tnorm{u-u_h}_{\tilde\beta}
\le C h^{r-1/2} \norm{u}_r. 
\end{equation}
Combining \eqref{L2EstimateA}, \eqref{L2EstimateB} 
and recalling that $\sigma_{tr}$ is in the interval $[h^2, h]$ we end up with 
\begin{equation}\label{L2EstimateC}
\norm{u-u_h}_{L_2(\Omega)}\le C h^{r-3/2} \norm{u}_r, 
\end{equation}
and the proof is complete. 
\end{proof}

\section{Petrov-Galerkin approaches} 

Roughly speaking, in the Petrov-Galerkin method one adds a streaming term to 
the test function.
The raison d\'e etre of such approach is described, 
motivated and analyzed in the classical SD methods.
Here, our objective is to briefly introduce a few cases of 
Petrov-Galerkin approaches in some
lower dimensional geometry and implement them in both 
direct and adaptive settings. 
Some specific form of the Petrov-Galerkin methods are studied in \cite{Johnson1992} where the 
method of exact transport + projection is introduced. Also both the semi-streamline diffusion as
well as the Characteristic streamline diffusion methods, 
which in their simpler forms are implemented here, are studied in \cite{MA2002}. 

\subsection{A semi-streamline diffusion scheme (SSD)}
Here the main difference with the standard approach is that we 
employ modified test functions of the form 
$w+\delta {\mathbf v}\cdot\nabla_\perp w$  with 
$\delta \ge \sigma_{tr}$. Further, we assume that 
$w$ satisfies the vanishing inflow boundary condition 
of \eqref{Fermi7}. Hence, multiplying the differential equation 
in \eqref{Fermi7} by $w+\delta({\mathbf v}\cdot\nabla_\perp w)$ and 
integrating over $\Omega_\perp=\Omega_{{\mathbf x}_\perp}\times\Omega_{\mathbf v}$ 
we have a variational formulation, viz 
\begin{equation}\label{SSD1}
\begin{split}
\Big(u_x & +{\mathbf v}\cdot \nabla_\perp u
-\frac 12\sigma_{tr}\Delta_{\mathbf v}u , 
w+\delta({\mathbf v}\cdot\nabla_\perp w)\Big)_\perp
=(u_x, w)_\perp+\delta(u_x, {\mathbf v}\cdot\nabla_\perp w)_\perp+
({\mathbf v}\cdot \nabla_\perp u, w)_\perp \\
&+ 
\delta( {\mathbf v}\cdot \nabla_\perp u,{\mathbf v}\cdot\nabla_\perp w)_\perp
+\frac 12(\sigma_{tr} \nabla_{\mathbf v} u,  \nabla_{\mathbf v}w)_\perp
+\frac {\delta}{2}( \sigma_{tr} \nabla_{\mathbf v} u, 
\nabla_{\mathbf v}({\mathbf v}\cdot\nabla_\perp w))_\perp=0.
\end{split}
\end{equation}

\subsubsection{The SSD stability estimate}
We let in \eqref{SSD1} $w=u$ and obtain the following identity
\begin{equation}\label{SSD1A}
\begin{split}
\frac 12 \frac {d}{dx} \norm{u}_\perp^2 & + 
\delta (u_x, {\mathbf v}\cdot \nabla_\perp u)_\perp+
\frac 12 \int_{\Gamma_\beta^+}({\mathbf n}\cdot\beta) u^2\, d\Gamma+ 
\delta\norm{{\mathbf v}\cdot \nabla_\perp u}_\perp^2 \\
&+\frac 12\norm{\sigma_{tr}^{1/2} \nabla_{\mathbf v} u}_\perp^2
+\frac {\delta}{2} \Big(\sigma_{tr} \nabla_{\mathbf v} u, 
\nabla_{\mathbf v}({\mathbf v}\cdot\nabla_\perp u)\Big)_\perp=0. 
\end{split}
\end{equation}
Now it is easy to verify that the last term above can be written as 
\begin{equation}\label{SSD1B}
\delta\Big(\sigma_{tr} \nabla_{\mathbf v} u, 
\nabla_{\mathbf v}({\mathbf v}\cdot\nabla_\perp u)\Big)_\perp
= \delta \int_{\Omega_\perp}\sigma_{tr}\Big(\nabla_{\mathbf v}u\cdot \nabla_\perp u
+\frac 12 {\mathbf v}\cdot \nabla_\perp(\abs{\nabla_{\mathbf v} u})^2\Big)\, 
dx_\perp \, d{\mathbf v}. 
\end{equation}
Due to symmetry the second term in the integral above vanishes. 
Hence we end up with 
\begin{equation}\label{SSD1C}
\begin{split}
\frac 12 \frac {d}{dx} \norm{u}_\perp^2 & + 
\delta (u_x, {\mathbf v}\cdot \nabla_\perp u)_\perp+
\frac 12 \int_{\Gamma_\beta^+}({\mathbf n}\cdot\beta) u^2\, d\Gamma+ 
\delta\norm{{\mathbf v}\cdot \nabla_\perp u}_\perp^2 \\
&+\frac 12\norm{\sigma_{tr}^{1/2} \nabla_{\mathbf v} u}_\perp^2
+\frac {\delta}{2} \Big(\sigma_{tr} \nabla_{\mathbf v} u, \nabla_\perp u)\Big)_\perp=0. 
\end{split}
\end{equation}
Next, we multiply the differential equation \eqref{Fermi7} by 
$\delta u_x$ and integrate over $I_\perp\times\Omega_{{\mathbf v}}$ to get 
\begin{equation}\label{SSD1D}
\delta\norm{u_x}^2+(\delta u_x, {\mathbf v}\cdot \nabla_\perp u)_\perp
+\frac {\delta}{2} (\sigma_{tr} \nabla_{{\mathbf v}}u, \nabla_{{\mathbf v}}u_x)_\perp=0.  
\end{equation}
The last inner product on the left hand side of \eqref{SSD1D} can be written as 
\begin{equation}\label{SSD1E}
(\sigma_{tr} \nabla_{{\mathbf v}}u, \nabla_{{\mathbf v}}u_x)_\perp=
\frac 12\frac {d}{dx}\int_{I_\perp\times\Omega_{{\mathbf v}}}\sigma_{tr} 
\abs{\nabla_{{\mathbf v}}}^2\, dx_\perp\, d {\mathbf v}-
\frac 12\int_{I_\perp\times\Omega_{\mathbf v}}
\frac{\partial\sigma_{tr}}{\partial x}\Big(\abs{\nabla_{\mathbf v}}^2\Big)
\, dx_\perp\, d {\mathbf v}.
\end{equation}
Now inserting \eqref{SSD1E} in \eqref{SSD1D} and adding the 
result to \eqref{SSD1C} 
we end up with 
\begin{equation}\label{SSD1F}
\begin{split}
\frac 12 \frac {d}{dx} \norm{u}_\perp^2 & + 
\delta\norm{u_x+ {\mathbf v}\cdot \nabla_\perp u}_\perp^2+ 
\frac 12 \int_{\Gamma_\beta^+}({\mathbf n}\cdot\beta) u^2\, d\Gamma 
+\frac 12\norm{\sigma_{tr}^{1/2} \nabla_{\mathbf v} u}_\perp^2\\
&+\frac {\delta}{2} \Big(\sigma_{tr} \nabla_{\mathbf v} u, \nabla_\perp u \Big)_\perp
+\frac {\delta}4\frac {d}{dx}\int_{I_\perp\times\Omega_{{\mathbf v}}}\sigma_{tr} 
\abs{\nabla_{{\mathbf v}}}^2\, dx_\perp\, d {\mathbf v}\\
&-
\frac {\delta}4\int_{I_\perp\times\Omega_{\mathbf v}}
\frac{\partial\sigma_{tr}}{\partial x}\Big(\abs{\nabla_{\mathbf v}}^2\Big)
\, dx_\perp\, d {\mathbf v}=0.
\end{split}
\end{equation}
Further we use the Cauchy-Schwarz inequality to get 
\begin{equation}\label{SSD1G}
\abs{\Big(\sigma_{tr} \nabla_{\mathbf v} u, \nabla_\perp u\Big)_\perp}
\le \frac 12\norm{\sigma_{tr}^{1/2}\nabla_\perp u}_\perp^2+ 
\frac 12\norm{\sigma_{tr}^{1/2}\nabla_{\mathbf v} u}_\perp^2. 
\end{equation}
Finally with an additional symmetry assumption on 
$x_\perp$ and ${\mathbf v}$ convections as 
(this is motivated by forward peakedness assumption in angle and energy 
which is used in deriving 
the Fokker-Plank/Fermi equations) 
\begin{equation}\label{SSD1H}
\norm{\sigma_{tr}^{1/2}\nabla_\perp u}_\perp\sim 
\norm{\sigma_{tr}^{1/2}\nabla_{\mathbf v} u}_\perp, 
\end{equation}
and the fact that $\sigma_{tr}$ is decreasing in the beams penetration 
direction, i.e. $\frac{\partial\sigma_{tr}}{\partial x}\le 0$, 
we may write \eqref{SSD1F} as 
\begin{equation}\label{SSD1I}
\begin{split}
\frac 12\frac {d}{dx}\Big(\norm{u}_\perp^2 +\frac 12 \delta 
\int_{I_\perp\times\Omega_{{\mathbf v}}}\sigma_{tr} 
\abs{\nabla_{{\mathbf v}}}^2\, dx_\perp\, d {\mathbf v}\Big) & +
\frac 12 \int_{\Gamma_\beta^+}({\mathbf n}\cdot\beta) u^2\, d\Gamma 
+\delta\norm{u_x+ {\mathbf v}\cdot \nabla_\perp u}_\perp^2\\
&+\frac 12
(1-\delta)\norm{\sigma_{tr}^{1/2}\nabla_{\mathbf v} u}_\perp^2\le 0.
\end{split}
\end{equation}
As a consequence for sufficiently small $\delta$ 
(actually $\delta\approx \sigma_{tr}^{1/2}\ll 1$) we have, e.g. 
\begin{equation}\label{SSD1J}
\frac {d}{dx}\Big(\norm{u}_\perp^2 + \frac 12\delta 
\int_{I_\perp\times\Omega_{{\mathbf v}}}\sigma_{tr} 
\abs{\nabla_{{\mathbf v}}}^2\, dx_\perp\, d {\mathbf v}\Big) <0 ,
\end{equation}
and hence $\norm{u}_\perp^2 +\frac {\delta}{2} 
\int_{I_\perp\times\Omega_{{\mathbf v}}}\sigma_{tr} 
\abs{\nabla_{{\mathbf v}}}^2\, dx_\perp\, d {\mathbf v}$ 
is strictly decreasing in $x$. Consequently, for each $x^\prime\in [0,L]$ 
we have that 
\begin{equation}\label{SSD1K}
\norm{u(x^\prime, \cdot,\cdot)}^2_{\perp}+
\frac {\delta}{2} \norm{\sigma_{tr}^{1/2}\nabla_{\mathbf v} u(x^\prime, \cdot,\cdot)}^2_\perp
\le 
\norm{u(0, \cdot,\cdot)}^2_{L_2(I_\perp\times\Omega_{\mathbf v})}+
\frac {\delta}{2} \norm{\sigma_{tr}^{1/2}\nabla_{\mathbf v} u(0, \cdot,\cdot)}^2_\perp.
\end{equation}
Thus, summing up we have proved the following stability estimates 
\begin{proposition}\label{SSDstabilities} 
Under the assumption \eqref{SSD1H} the following 
$L_2(I_\perp\times \Omega_{\mathbf v})$ stability holds true 
\begin{equation}\label{SSD1L}
\norm{u(L, \cdot,\cdot)}^2_{\perp}+
\frac {\delta}{2} \norm{\sigma_{tr}^{1/2}\nabla_{\mathbf v} u(L, \cdot,\cdot)}^2_\perp
\le \norm{u_0}_{\perp}^2
+\frac {\delta}{2} \norm{\sigma_{tr}^{1/2}\nabla_{\mathbf v} u_0}_{\perp}^2. 
\end{equation}
Moreover, 
we have also the second stability estimate 
\begin{equation}\label{SSD1M}
\Tnorm{u}_{\tilde\beta}^2+
\delta\norm{u_x+ {\mathbf v}\cdot \nabla_\perp u}_{L_2(\Omega)}^2
\le \tilde C\Big(\norm{u_0}_{L_2(I_\perp\times\Omega_{\mathbf v})}^2
+\delta \norm{\sigma_{tr}^{1/2}\nabla_{\mathbf v} u_0}
_{L_2(I_\perp\times\Omega_{\mathbf v})}^2\Big).
\end{equation}
\end{proposition}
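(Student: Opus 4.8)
The plan is to read off both stability estimates from the single differential inequality \eqref{SSD1I}, which has already been assembled by setting $w=u$ in the variational form \eqref{SSD1}, adding the equation tested against $\delta u_x$ from \eqref{SSD1D}, and invoking the symmetry hypothesis \eqref{SSD1H} together with the monotonicity $\partial\sigma_{tr}/\partial x\le 0$. Once \eqref{SSD1I} is in hand the two claimed bounds differ only in how one exploits it: the first is obtained by propagating the pointwise decay in $x$, while the second is obtained by integrating across the whole slab $[0,L]$ and recognising the integrated quantities as the constituents of the triple norm.

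For the $L_2$ estimate \eqref{SSD1L} I would simply observe that, discarding the three manifestly nonnegative terms on the left of \eqref{SSD1I} (the inflow--outflow boundary integral, the streamline term, and the velocity-gradient term, the last being nonnegative for $\delta<1$), what remains is exactly the statement \eqref{SSD1J} that the energy $\norm{u}_\perp^2+\tfrac12\delta\int\sigma_{tr}\abs{\nabla_{\mathbf v}u}^2$ is nonincreasing in $x$. Evaluating the resulting inequality \eqref{SSD1K} at the endpoint $x'=L$ yields \eqref{SSD1L} verbatim, so no further work is needed there.

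For the triple-norm estimate \eqref{SSD1M} I would instead integrate \eqref{SSD1I} over $x\in(0,L)$ without throwing anything away. The total-derivative term then contributes the endpoint difference $\tfrac12 E(L)-\tfrac12 E(0)$, where $E(x)$ denotes the energy above; the boundary integral over $\Gamma_\beta^+$ integrated in $x$ reproduces the first constituent of $\Tnorm{\cdot}_{\tilde\beta}^2$ (see \eqref{Fermi19A}); the term $\delta\norm{u_x+{\mathbf v}\cdot\nabla_\perp u}_\perp^2$ integrates to the streamline contribution $\delta\norm{u_x+{\mathbf v}\cdot\nabla_\perp u}_{L_2(\Omega)}^2$; and the factor $\tfrac12(1-\delta)\norm{\sigma_{tr}^{1/2}\nabla_{\mathbf v}u}_\perp^2$ integrates to $\tfrac12(1-\delta)\norm{\sigma_{tr}^{1/2}\nabla_{\mathbf v}u}_{L_2(\Omega)}^2$, the second constituent of the triple norm up to the scalar $\tfrac12(1-\delta)$. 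Dropping the nonnegative endpoint term $\tfrac12 E(L)$ and moving $\tfrac12 E(0)$ to the right already gives a clean inequality; it then remains to factor out the smaller of the two coefficients $\tfrac12$ and $\tfrac12(1-\delta)$ standing in front of the triple-norm pieces so as to expose $\Tnorm{u}_{\tilde\beta}^2$ on the left, and to absorb the reciprocal into a single constant $\tilde C$.

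The only delicate point is the constant bookkeeping in this last step. Since $1-\delta<1$, the gradient piece carries the smaller weight $\tfrac12(1-\delta)$, so dividing through by it produces $\tilde C$ of order $(1-\delta)^{-1}$; this stays bounded precisely because the SSD parameter is taken small, $\delta\approx\sigma_{tr}^{1/2}\ll1$, which also keeps the streamline coefficient $\delta$ dominated by $\tfrac{2\delta}{1-\delta}$ as required for the left-hand side to have the asserted form. I expect no analytic obstacle beyond this: the genuine structural content — the cancellation of the skew term in \eqref{SSD1B} by symmetry, the sign of $\partial_x\sigma_{tr}$, and the equivalence \eqref{SSD1H} — has already been spent in producing \eqref{SSD1I}, and what is left is an algebraic rearrangement together with the identification of the space--time integrals with the constituents of the triple norm.
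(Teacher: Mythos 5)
Your proposal is correct and follows essentially the same route as the paper: the paper's ``proof'' of Proposition \ref{SSDstabilities} is precisely the derivation \eqref{SSD1A}--\eqref{SSD1K}, with \eqref{SSD1L} read off from \eqref{SSD1K} at $x'=L$ and \eqref{SSD1M} obtained from \eqref{SSD1I} integrated over $(0,L)$. In fact your integration-in-$x$ argument and the constant bookkeeping with $\tilde C\sim(1-\delta)^{-1}$ supply exactly the details that the paper compresses into ``summing up,'' so nothing is missing.
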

\begin{remark}
Due to the size of smallness parameters $\delta$ and $\sigma_{tr}$ 
we can easily verify the second stability estimate \eqref{SSD1M} 
which also yields
\begin{equation}\label{SSD1N}
\norm{u_x+ {\mathbf v}\cdot \nabla_\perp u}_{L_2(\Omega)}\le \tilde C 
\norm{\sigma_{tr}^{1/2}\nabla_{\mathbf v} u_0}
_{L_2(I_\perp\times\Omega_{\mathbf v})}.
\end{equation}
Hence, using the equation \eqref{Fermi7}, we get  
\begin{equation}\label{SSD1P}
\norm{\sigma_{tr}\Delta_{\mathbf v} u}_{L_2(\Omega)}\le \tilde C 
\norm{\sigma_{tr}^{1/2}\nabla_{\mathbf v} u_0}
_{L_2(I_\perp\times\Omega_{\mathbf v})}. 
\end{equation}
The estimate \eqref{SSD1P} indicates the regularizing effect of the 
diffusive term $\Delta_{\mathbf v} u$ in the sense that 
$u_0\in H_{\sqrt{\sigma_{tr}}}^r(I_\perp\times\Omega_{\mathbf v})$ implies that 
$u\in H_{\sigma_{tr}}^{r+1}(I_\perp\times\Omega_{\mathbf v})$. However this 
regularizing effect will decrease by the size of $\sigma_{tr}$. 
\end{remark}

\section{Model problems in lower dimensions}

\label{sec:fyra}

We consider now a forward peaked narrow radiation beam entering into the symmetric  
domain $I_y \times I_\eta = [-y_0 , y_0] \times
[-\eta_0, \eta_0]$;  $(y_0 , \eta_0) \in \mathbb{R}_+^2$ at $(0,0)$ and penetrating in
the direction of the positive $x$-axis.  Then the computational domain
$ \Omega $ of our study is a three dimensional slab with $(x, y, \eta)
\in \Omega = I_x \times I_y \times I_\eta$ where  $ I_x = [0, L]$.
In this way, the problem \eqref{Fermi7} will be transformed into the
following lower dimensional model problem
\begin{equation}
\label{model}
\left\{
\begin{array}{rl}
u_x + \eta u_y = \frac 12 \sigma_{tr} u_{\eta \eta}  \quad  &(x,y,\eta) \in \Omega, \\
u_\eta (x, y, \pm \eta_0) = 0 \quad &(x,y) \in I_x \times I_y, \\
u(0, y, \eta)  = f(y, \eta)  \quad &(y, \eta) \in I_y \times I_\eta,  \\
u(x, y, \eta) = 0 \quad & \mbox{ on }  \Gamma_\beta^- \setminus\{(0, y, \eta)\}.
\end{array}
\right.
\end{equation}
For this problem we implement two different versions of the 
streamline 
diffusion method: the semi-streamline 
diffusion and the characteristic streamline diffusion. 
Both cases are discretized using 
linear polynomial approximations.

\subsection{The semi-streamline diffusion method}
\label{sec:semiSD}

In this version we derive a discrete scheme for computing the 
approximate solution $ u_h $ of the exact solution $ u $ using the
SD-method for discretizing the $ (y, \eta) $-variables 
(corresponding to multiply the equation by test functions 
of the form $w+\delta\eta w_y$) 
combined with the backward Euler method for the $ x $-variable. 
We start by introducing the bilinear forms 
$a(\cdot, \cdot)$ and $b(\cdot, \cdot)$ for
the problem (\ref{model}) as: 
\begin{equation}
\label{FD1}
\begin{split}
a(u, w) = & \, (\eta u_y , w)_\perp + \delta(\eta u_y ,\eta w_y)_\perp
+\frac 12 (\sigma_{tr} u_\eta , w_\eta)_\perp \\ 
& +\frac 12 \delta(\sigma_{tr}u_\eta,  w_y+\eta w_{y\eta})_\perp
 -\frac 12\delta \int_{I_y}\sigma_{tr} \eta u_\eta w_y\Big|_{\eta=-\eta_0}^{\eta=\eta_0} dy, \\
b(u, w) =& \, ( u , w)_\perp + \delta(u , \eta w_y)_\perp,
\end{split}
\end{equation}
where $(\cdot,  \cdot)_\perp:=(\cdot,  \cdot)_{I_y\times I_\eta}$. 
Then the continuous problem reads as: for each $ x \in (0, L] $, find $ u(x, \cdot) \in H^1_\beta $ such that 
$$
b(u_x, w)+a(u, w)=0, \qquad \forall w\in H^1_\beta,
$$
where
$$
H^1_\beta:=\{ w\in H^1 (I_y\times I_\eta); 
w=0 \mbox{ on }  \Gamma_\beta^- \}, 
$$
and
\begin{equation}\label{BCLOWDIM}
\Gamma_\beta^- :=\{(y,\eta)\in \Gamma:=\partial ( I_y\times I_\eta), \,\,\,
\mbox{with}\,\, {\mathbf n}\cdot\beta<0\},
\end{equation} 
with $\beta=(\eta, 0)$. 
Then the semi-streamline diffusion method for the continuous problem
(\ref{model}) reads as follows: 
for each $ x \in (0, L] $, find $ u_h (x, \cdot)  \in \mathcal{V}_{h,\beta} $
such that,
\begin{equation}
\label{FD3}
b(u_{h, x} ,w) + a(u_h, w) = 0, \quad \forall w \in \mathcal{V}_{h,\beta},
\end{equation}
where $\mathcal{V}_{h,\beta} \subset H^1_\beta $ consists of continuous piecewise linear 
functions. 
Next, we write the global discrete solution 
by separation of variables as 
\begin{equation}
\label{FD4}
u_h(x,y,\eta) = \sum_{j=1}^{N} U_j(x) \phi_j(y,\eta), 
\end{equation}
where $ N $ is the number of nodes in the mesh. Letting $ w = \phi_i $ for $ i = 1, 2, \ldots, N $
and inserting (\ref{FD4}) into (\ref{FD3}) we get the following
discrete system of equations,
\begin{equation}
\label{FD5}
\sum_{j=1}^{N} U'_j(x) b(\phi_j, \phi_i) + \sum_{j=1}^{N} U_j(x) a(\phi_j, \phi_i) = 0,  \quad i = 1, 2, \ldots, N.
\end{equation}
Equation (\ref{FD5}) in matrix form can be written as
\begin{equation}
\label{FD6}
B U'(x)  +A U(x) = 0,
\end{equation}
with $ U=[U_1, ..., U_N]^T, B = (b_{ij}), b_{ij} =b(\phi_j , \phi_i)$
and $A = (a_{ij}), a_{ij} =a(\phi_j , \phi_i), i, j = 1, 2, \ldots, N
$. We apply now the backward Euler method for further 
discretization of the equation 
(\ref{FD6}) in variable $ x $, and with the step size $ k_m $, 
to obtain an iterative form viz 
\begin{equation}
\label{FD7}
B(U^{m+1} - U^{m}) + k_m A U^{m+1} =0.
\end{equation}
The equation above can be rewritten as a system of equations for finding the
solution $ U^{m+1}$ (at ``time'' level $x=x_{m+1}$) 
on iteration $ m+1 $ from the known solution $ U^{m} $
from the previous iteration step $ m$:
\begin{equation}
[ B + k_m A] U^{m+1} = B U^m.
\label{backward Euler}
\end{equation}

\subsection{Characteristic Streamline Diffusion Method}

In this part we construct an oriented phase-space mesh to obtain the 
characteristic
streamline diffusion method. Before formulating this method, 
we need to construct a new subdivision of 
$ \Omega = I_x \times I_y \times I_\eta $. To this end and 
for $ m = 1, 2, \ldots, M $, we define a subdivision of
$\Omega_m = [ x_{m-1}, x_m ] \times I_y \times I_\eta:=I_m\times I_y \times I_\eta $ into elements
\begin{equation*}
\hat{\tau}_m = \{ (x, y + (x - x_m) \eta, \eta ) :
(y, \eta) \in \tau \in \mathcal{T}_h, \,\, x \in I_m \},
\end{equation*}
where $ \mathcal{T}_h $ is a previous triangulation of $ I_\perp $.
Then we introduce, {\sl slabwise}, the function spaces
\begin{equation*}
\hat{\mathcal{V}}_m = \{ \hat{w} \in C (\Omega_m ) :
\hat{w}(x, y, \eta ) = w(y + (x - x_m)\eta, \eta), w \in \mathcal{V}_{h,\beta} \}.
\end{equation*}
In other words $ \hat{\mathcal{V}}_m $ consists of continuous functions
$ \hat{w} (x, y, \eta) $ on $ \Omega_m $ such that $ \hat{w} $ is constant along
characteristics $ (\hat{y}, \hat{\eta}) = (y + x\eta, \eta) $ 
parallel to the sides
of the elements $ \hat{\tau}_m $, meaning that the derivative in the 
characteristic direction: 
$ \hat{w}_x + \eta \hat{w}_y = 0 $.
The streamline diffusion method can now be reduced to the following 
formulation (where only the $\sigma_{tr}$-term survives):
find $ \hat{u}_h $ such that, for each $ m = 1, 2, \ldots, M,$ 
 $ \hat{u}_h|_{\Omega_m} \in \hat{\mathcal{V}}_m $ and
\begin{equation}
\label{CSD}
\begin{split}
\frac 12 \int_{\Omega_m}\sigma_{tr} \hat{u}_{h, \eta} w_\eta \, dx dy d\eta
+ \int_{I_\perp} \hat{u}_{h,+} (x_{m-1}, y, \eta) w_+ (x_{m-1}, y, \eta) 
\, dy d\eta \\
= \int_{I_\perp} \hat{u}_{h,-} (x_{m-1}, y, \eta) w_+ (x_{m-1}, y, \eta) \, dy d\eta,
\quad \forall w \in \hat{\mathcal{V}}_m.
\end{split}
\end{equation}
Here, for definition of $ \hat{u}_{h,+},\hat{u}_{h,-}, w_+ $ we refer to (\ref{Fermi12}).

\section{Adaptive algorithm}

\label{sec:ref}

In this section we formulate an adaptive algorithm, which is used in
computations of the numerical examples studied in Section \ref{sec:num}.
This algorithm improves the accuracy of the computed
solution $ u_h $ of the model problem (\ref{model}). In the sequel for 
simplicity we denote $I_y\times I_\eta$ also by $\Omega_\perp$ 
(this however, should not be mixed with the notation in the theoretical Sections 1-3).

\vskip 0.3cm 
\noindent 
 \textbf{The Mesh Refinement Recommendation} \emph{We refine the
   mesh in neighborhoods of those points in }$I_y\times I_\eta $\emph{\ where
   the error} $ \varepsilon_n = | u - u_h^n | $\emph{\ attains its maximal values. More
   specifically, we refine the mesh in such subdomains of
 }$I_y\times I_\eta $\emph{\ where}
\begin{equation*}
\varepsilon_n  \geq \widetilde{\gamma} \max \limits_{\Omega_\perp} \varepsilon_n.
\end{equation*}
\emph{Here $ \widetilde{\gamma} \in (0,1)$ is a number which should
be chosen computationally and $ u_h^n $ denotes the computed solution
on the $ n $-th refinement of the mesh.}

\vskip 0.5cm 
\noindent
\textbf{The steps in adaptive algorithm}

\vspace{0.3cm}

\begin{itemize}

\item[Step 0.]  Choose an initial mesh $ I_m^0 \times
 \tau^0 $ in $ I_x \times I_y\times I_\eta $ and obtain the numerical solution
  $ u_h^n,\, n >0 $, where $n$ is  number of the mesh refinements, in the 
  following steps:

\item[Step 1.]  Compute the numerical solution $ u_h^n$ on
  $ \tau^n$ using any of the finite element methods introduced in section
  \ref{sec:fyra}.

\item[Step 2.]  Refine those elements in the mesh $ \tau^n $ for which
\begin{equation} \label{adaptalg}
\varepsilon_n  \geq \widetilde{\gamma} \max \limits_{\Omega_\perp} \varepsilon_n.
\end{equation}
Here, the values for the tolerance $ \widetilde{\gamma} \in \left(
0,1\right) $ are chosen by the user.

\item[Step 3.]  Define a new refined mesh as $ \tau^{n+1} $.
  Construct a new partition $ I_m^{n+1}$ if needed.  Perform
  steps 1-3 on the mesh $  I_m^{n+1} \times \tau^{n+1} $.
Stop mesh refinements when $\| u_h^n - u_h^{n-1} \|_{L_2(\Omega_\perp)} < tol$,
where $tol$ is a total tolerance chosen by the user.

\end{itemize}

\begin{table}
\center
\begin{tabular}{ l | l l l l l l}
\hline
Nr. of &  Nr. of   & Nr. of & DOF & $e_n = \| u - u^n_h\|_{L_2}$  & $e_n/e_{n+1}$\\
 refinement, $n$& elements & vertices &   &  & \\
\hline
0   & 272      & 157    & 157     & 8.364e-03    &     \\
1              & 1176     & 591    & 591     & 2.134e-04    & 3.92\\
2              & 4704     & 2268   & 2268    & 5.368e-04    & 3.97\\
3              & 17616    & 8878   & 8878    & 1.345e-05    & 3.99\\
4              & 69864    & 35231  & 35231   & 1.263e-05    & 4.00\\
\hline
\end{tabular}
\caption{Test 1-a).  Computed errors $e_n = \|u -
  u_h^n \|_{L_2(\Omega_\perp)}$ and $e_n/e_{n+1}$ on the adaptively
  refined meshes. Here, the solution $ u_h^n $ is computed using
  semi-streamline diffusion method of section \ref{sec:semiSD} with
  $\widetilde{\gamma} = 0.5 $ in the adaptive algorithm and $\alpha=0.1$ in (\ref{dirac}).}
\label{test1ap1}
\end{table}

\begin{table}
\center
\begin{tabular}{ l | l l l l l l}
\hline
Nr. of &  Nr. of   & Nr. of & DOF & $e_n = \| u - u_h^n \|_{L_2}$  & $e_n/e_{n+1}$\\
 refinement, $n$& elements & vertices &   &  & \\
\hline
0    & 272      & 157    & 157     & 8.364e-03     &    \\
1              & 1088     & 585    & 585     & 8.278e-03    & 1.01\\
2              & 4352     & 2257   & 2257    & 2.105e-03    & 3.93\\
3              & 17408    & 8865   & 8865    & 5.290e-04    & 3.98\\
4              & 69632    & 35137  & 35137   & 1.325e-04    & 3.99\\
\hline
\end{tabular}
\caption{Test 1-b). Computed  errors $ e_n = \| u -
  u_h^n \|_{L_2(\Omega_\perp)}$ and $e_n/e_{n+1}$ on the adaptively
  refined meshes. Here, the solution $ u_h^n $ is computed using
   semi-streamline diffusion method of section \ref{sec:semiSD} with $ \widetilde{\gamma} = 0.7 $
 in the adaptive algorithm and $\alpha=0.1$ in (\ref{dirac}). }
\label{test1bp1}
\end{table}

\section{Numerical examples}

\label{sec:num}

In this section we present numerical examples which show the
performance of  an adaptive finite element method
 for
the solution of the model problem (\ref{model}).  Here, all 
computations are performed in Matlab COMSOL Multiphysics using module
LIVE LINK MATLAB.
  We choose the domain $ \Omega_\perp = I_y \times I_\eta$ as 
 \begin{equation*}
 \Omega_\perp = \left\{ (y, \eta) \in (
 -1.0,1.0) \times (-1.0,1.0)\right\} .
 \end{equation*}
Our tests are performed with a fixed diffusion coefficient
$\sigma_{tr} = 0.002$. Further, due to smallness of the parameters $\delta$ and $\sigma_{tr}$, 
the terms that involve the product $\delta\sigma_{tr}$ are assumed to be negligible.
In the backward Euler scheme, used discretization in $x$-variable, we 
solve the system of equations (\ref{backward Euler}) which ends up with a 
discrete (computed) solution $U^{m+1}$ of (\ref{model}) at the time
iteration $m+1$ and with the time step $k_m$ 
which has been chosen to be $ k_m = 0.01$.

Previous computational studies, e.g.\ \cite{MA-EL:2007}, have shown oscillatory
behavior of the solution $ u_h $ when the semi-streamline diffusion
method was used, and layer behavior  when the standard Galerkin method
was applied to solve the model problem (\ref{model}).
In this work we significantly improve results of \cite{MA-EL:2007} by
using the adaptive algorithm of section \ref{sec:ref} on the locally
adaptively refined meshes. 
All our computations are compared with the closed form analytic 
solution for the model problem (\ref{model})  given by
 \begin{equation}
  \label{Exact solution}
  u (x,y, \eta) = \frac{ \sqrt{3}}{ \pi \sigma_{tr} x^{2}}
\exp \left[ - \frac{2}{\sigma_{tr}} \left( \frac{3 y^2}{x^3} - \frac{3 y \eta}{x^2}+ \frac{\eta^2}{x} \right)  \right],
  \end{equation}
when the initial data is given by $ u (0, y, \eta) = \delta (y) \delta (\eta) $.
\\

We have performed the following   computational tests:

\begin{itemize}

\item Test 1. Solution of the model problem (\ref{model}) with a ``Dirac type''
  initial condition 
\begin{equation}\label{dirac}
u(0, y,\eta) = f(y,\eta)  = 1/( y^2 + \eta^2 +\alpha), 
\quad (y, \eta) \in \Omega_\perp, 
\end{equation}
for   different values of the parameter $\alpha \in (0,1)$.

\item Test 2. Solution of the model problem (\ref{model}) with
  ``Maxwellian type'' initial condition 
 \begin{equation}\label{maxwellian} 
u(0, y, \eta) = f(y, \eta)   = \exp^{-(y^2 + \eta^2 + \alpha)}, 
\quad (y, \eta) \in \Omega_\perp, 
\end{equation}
for different values of $\alpha \in (0,1)$.

\item Test 3. Solution of the model problem (\ref{model}) with
 a ``hyperbolic type'' initial condition 
 \begin{equation}\label{hyperbolic} 
u (0,y, \eta ) = f(y, \eta)   = \frac{1}{\sqrt{y^2+ \eta^2+ \alpha}}, 
\quad (y, \eta) \in \Omega_\perp, 
\end{equation}
for $\alpha= 0.19$. 

\end{itemize}

 \begin{figure}[tbp]
 \begin{center}
 \begin{tabular}{ccc}
 {\includegraphics[scale=0.25, clip=true,]{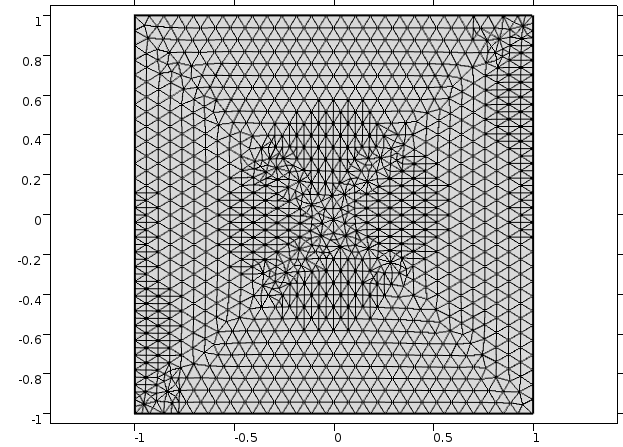}} &
 {\includegraphics[scale=0.25, clip=true,]{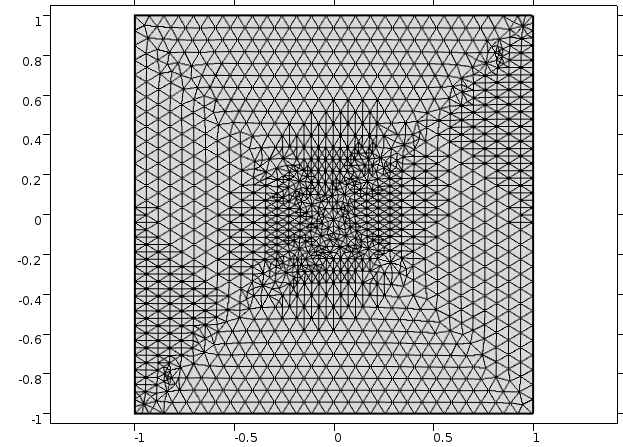}} &
 {\includegraphics[scale=0.25,clip=true,]{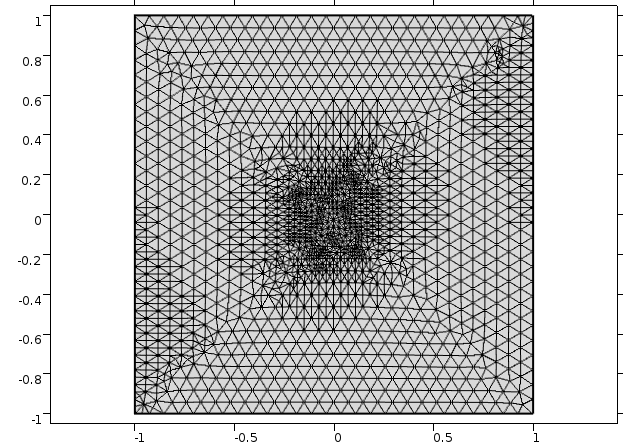}} \\
a)  &  b)  & c)  \\ 
 {\includegraphics[scale=0.25, clip=true,]{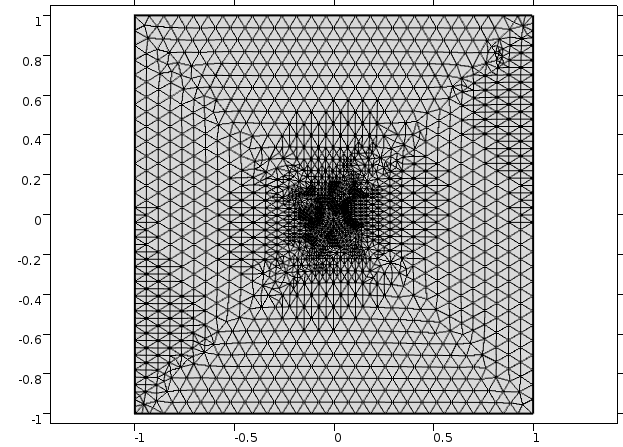}} &
{\includegraphics[scale=0.25, clip=true,]{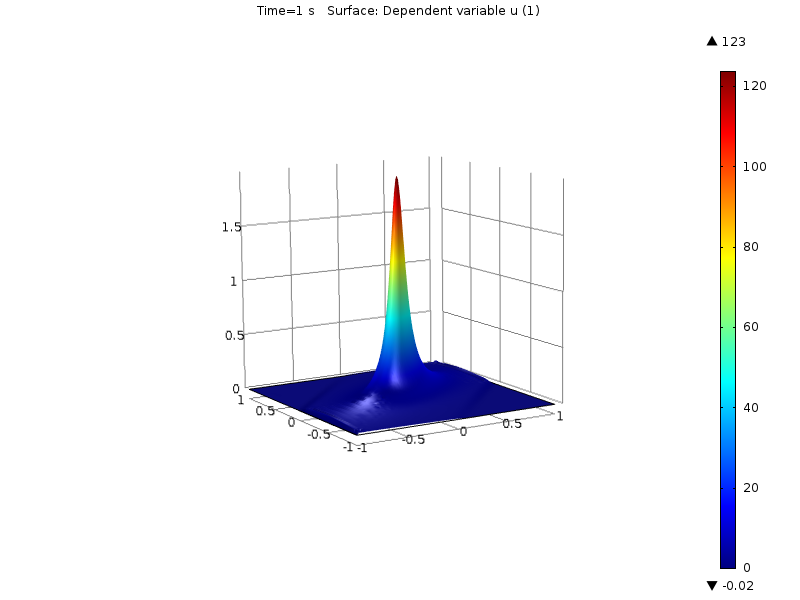}} \\
d)  &  e)
 \end{tabular}
 \end{center}
 \caption{Test 1-a. a)-d) Locally adaptively refined meshes of Table
   \ref{test1ap1}; e) Computed solution on the 4 times adaptively refined mesh d). }
 \label{fig:fig1a}
 \end{figure}

 \begin{figure}[tbp]
 \begin{center}
 \begin{tabular}{ccc}
 {\includegraphics[scale=0.3, clip=true,]{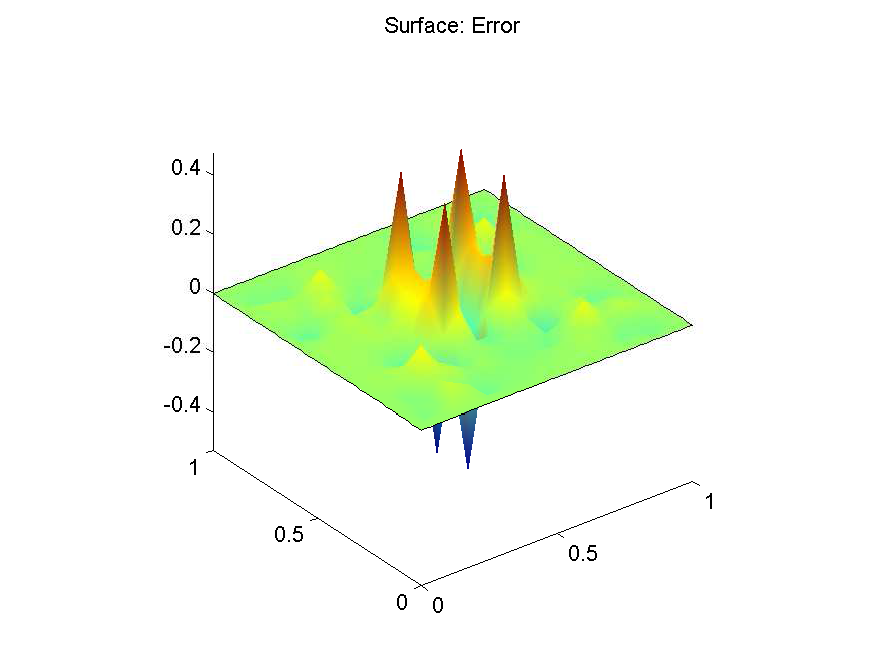}} &
 {\includegraphics[scale=0.3, clip=true,]{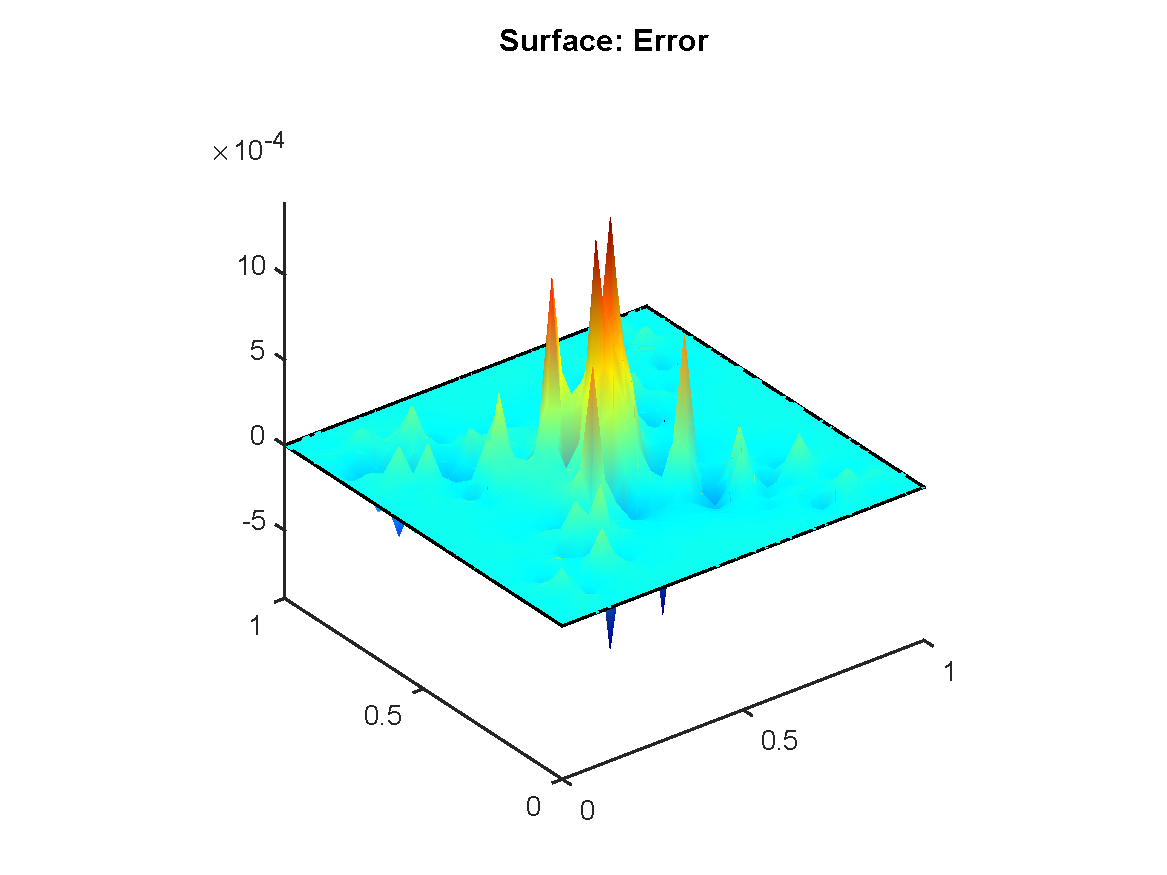}} &
 {\includegraphics[scale=0.3,clip=true,]{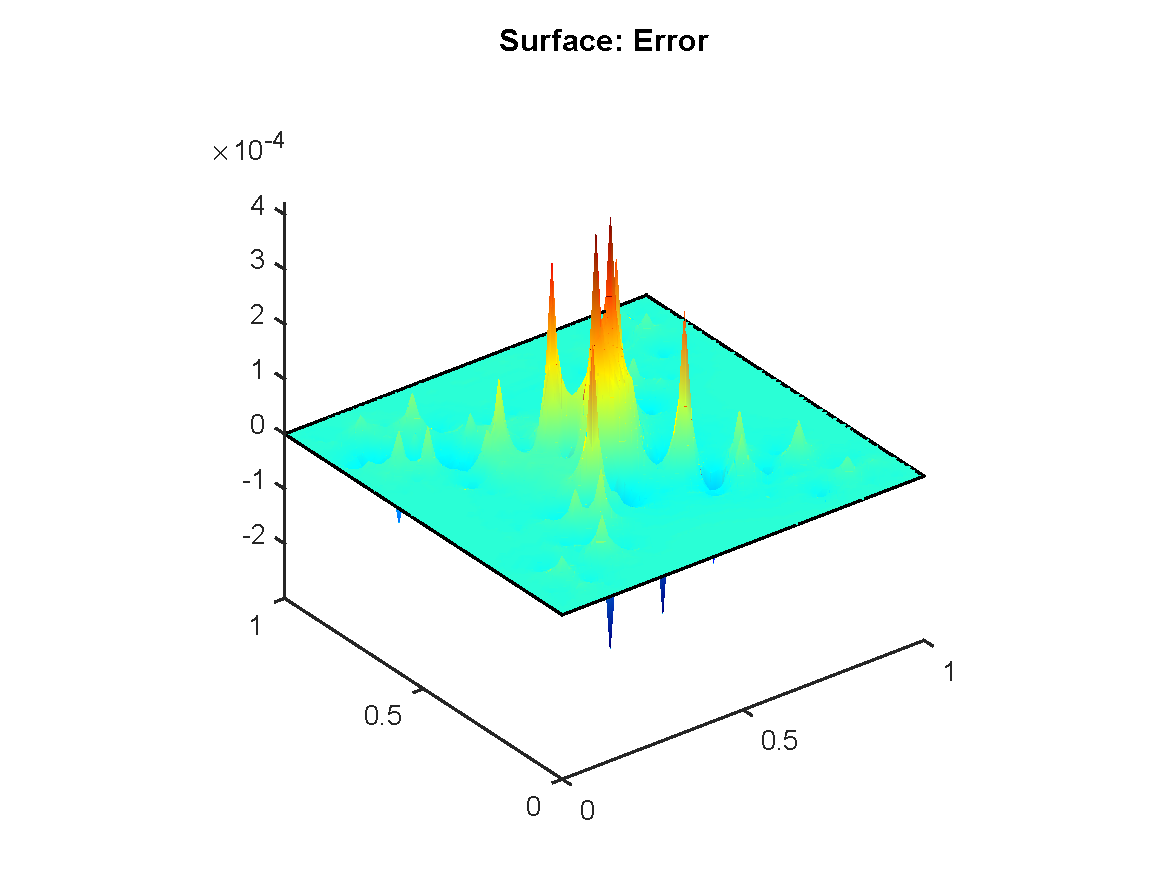}} \\
a)   &  b)  & c) \\
 {\includegraphics[scale=0.3, clip=true,]{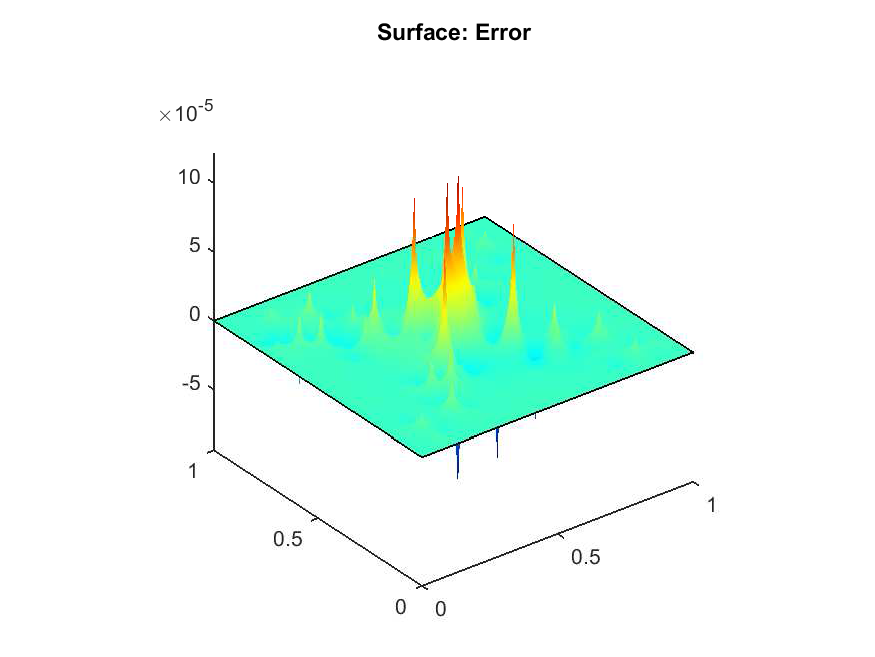}} &
 {\includegraphics[scale=0.3, clip=true,]{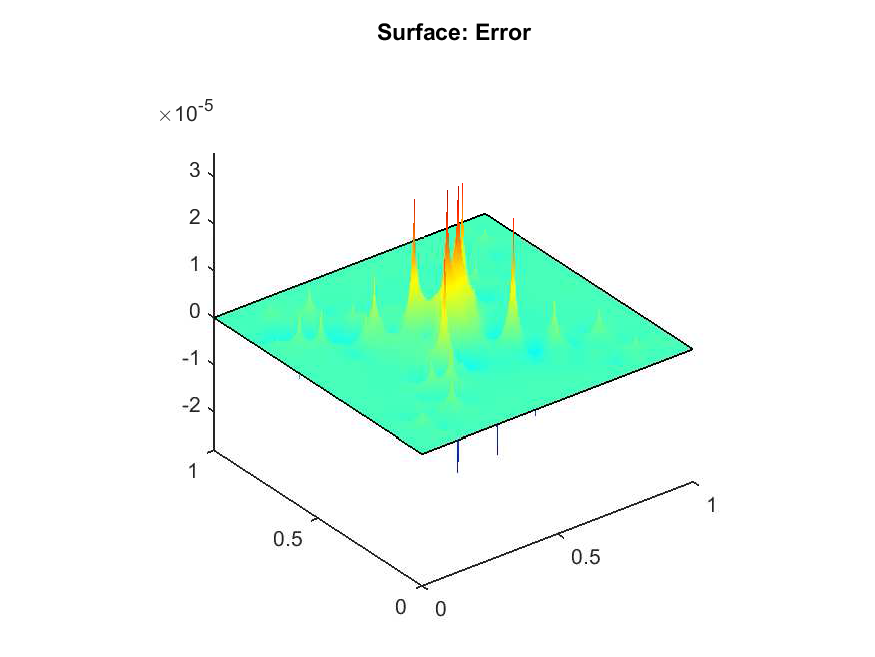}} &  \\
 \\
d)  &  e)  &  \\ 
 \end{tabular}
 \end{center}
 \caption{Test 1-a). Computed errors $ \mathcal{E}_n (x,y) =  u (x,y) - u_h^n (x,y) $ on the locally adaptively
   refined meshes of Table \ref{test1ap1}   on the meshes
   of Figure \ref{fig:fig1a}-a)-d).  }
 \label{fig:fig1ameshes}
 \end{figure}

 \begin{figure}[tbp]
 \begin{center}
 \begin{tabular}{ccc}
 {\includegraphics[scale=0.25, clip=true,]{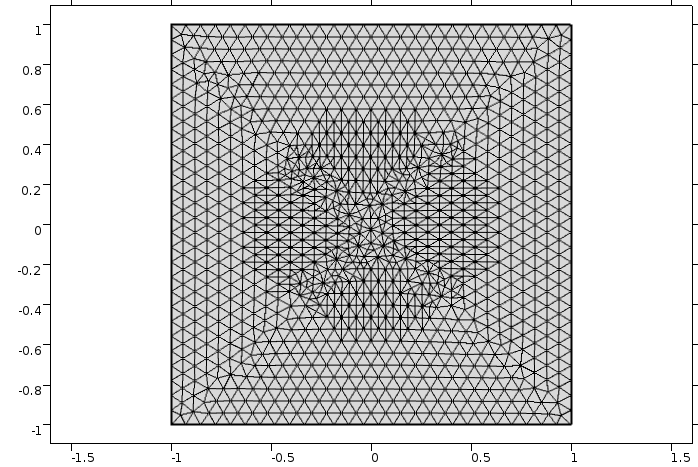}} &
 {\includegraphics[scale=0.25, clip=true,]{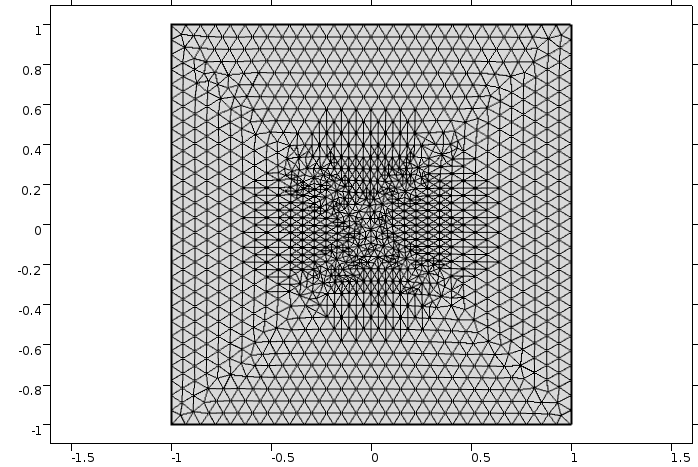}} &
 {\includegraphics[scale=0.25,clip=true,]{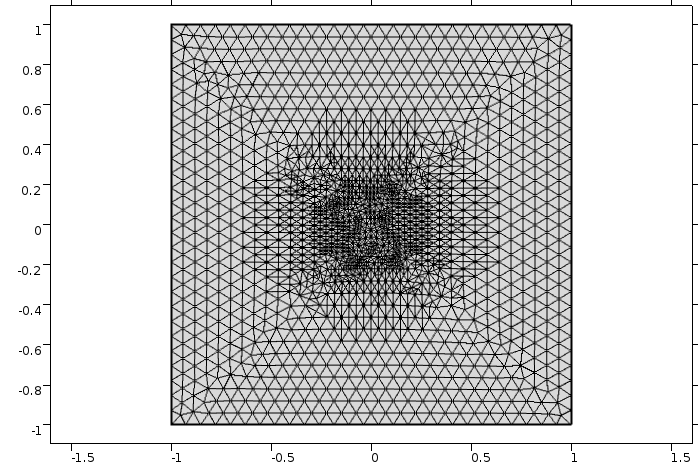}} \\
a)  &  b)  & c)  \\ 
 {\includegraphics[scale=0.25, clip=true,]{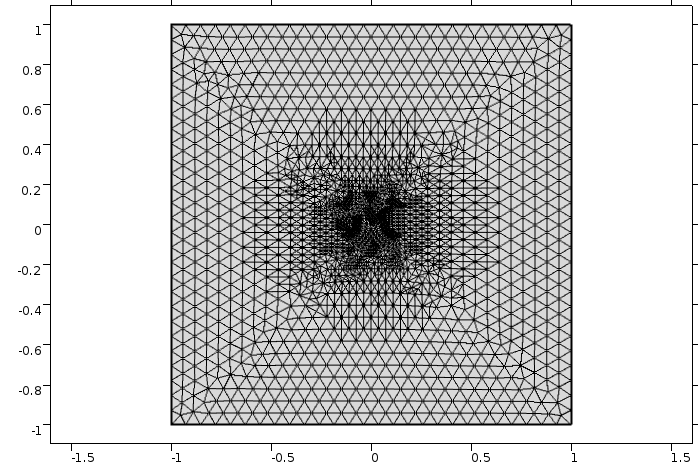}} &
{\includegraphics[scale=0.25, clip=true,]{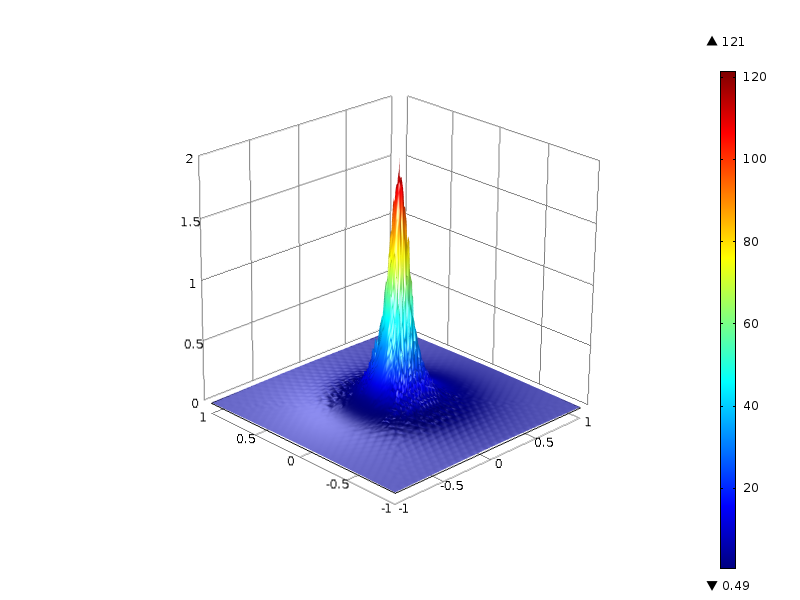}} \\
d)  &  e)
 \end{tabular}
 \end{center}
 \caption{Test 1-b.  a)-d) Locally adaptively refined meshes of Table
   \ref{test1bp1}; e) Computed solution on the 4 times adaptively refined mesh d). }
 \label{fig:fig1b}
 \end{figure}

\subsection{Test 1}
 
In this test we compute numerical simulations for the problem
(\ref{model}) with a ``Dirac type'' initial condition (\ref{dirac}) and for
different values of the parameter $\alpha \in (0,1)$ 
in (\ref{dirac}), where we use adaptive
algorithm of Section \ref{sec:fyra} on the locally adaptively refined
meshes. These meshes were refined according to the
error indicator (\ref{adaptalg}) in the adaptive algorithm. 
For computation of the finite element solution we employ 
semi-streamline diffusion method of Section \ref{sec:semiSD}.
We performed two set of numerical experiments:

\begin{itemize}
\item Test 1-a).  We take $\widetilde{\gamma} = 0.5$ in
  (\ref{adaptalg}). This choice of the parameter allows
to  refine the mesh $ \tau $ not only at the
  center of the domain $ \Omega_\perp$, but also at the boundaries of  
$ \Omega_\perp$.

\item Test 1-b). We take $\widetilde{\gamma} = 0.7$ in
  (\ref{adaptalg}). Such choice of the parameter allows to 
refine the mesh $ \tau $ only at the
  middle of the domain $ \Omega_\perp $.

\end{itemize}

Our computational tests have shown that the values for $\alpha \in
(0.05,0.1)$ give smaller computational errors 
$e_n = \| u -u_h^n \|_{L_2(\Omega_\perp)}$ than the other $\alpha$-values.  
The results of the computations for
$\alpha=0.1$ are presented in Tables \ref{test1ap1} and
\ref{test1bp1} for Test 1-a) and Test 1-b), respectively.  Using
these tables and Figures \ref{fig:fig1a} and \ref{fig:fig1b} we observe
that we have obtained significant reduction of the computational error
$e_n = \| u - u_h^n \|_{L_2(\Omega_\perp)}$ on the adaptively refined 
meshes. These errors 
in the form $ \mathcal{E}_n (x,y) = u (x,y) - u_h^n (x,y) $, 
on the adaptively refined meshes, are shown on Figure
\ref{fig:fig1ameshes}.  Using Tables \ref{test1ap1}, \ref{test1bp1} we
observe that the reduction of the computational error is faster and more
significant in the case a) than in the case b).  Thus, choosing the
parameter $\widetilde{\gamma} = 0.5$ in (\ref{adaptalg}) gives a 
better computational result and smaller error $e_n$ 
than $\widetilde{\gamma} = 0.7$. 
This allows us to conclude that the
refinement of the mesh $ \tau $ not only at the center of the domain 
$ \Omega_\perp $, but also at the boundaries of $ \Omega_\perp$ 
give significantly
smaller computational error $ e_n = \| u - u_h^n \|_{L_2(\Omega_\perp)} $.

We present the final solution $ u_h^4 $
computed on the 4 times adaptively refined mesh on the Figure
\ref{fig:fig1a}-f) for Test 1-a) and on the Figure \ref{fig:fig1b}-f)
for Test 1-b).  We note that in both cases we have obtained smoother  
computed solution $ u_h^4 $ without any oscillatory behavior.  This is
a significant improvement of the result of \cite{MA-EL:2007} where mainly 
oscillatory solution could be obtained.

\subsection{Test 2}

\begin{table}
\center
\begin{tabular}{ l | l l l l l l}
\hline
Nr. of &  Nr. of   & Nr. of & DOF & $ e_n = \| u - u_h^n \|_{L_2}$  & $e_n/e_{n+1}$\\
 refinement, $n$& elements & vertices &   &  & \\
\hline
0   & 272    & 157    & 585     & 2.582e-04      &         \\
1           & 1288   & 592    & 2267    & 3.242e-05          & 7.97\\
2           & 4552   & 2267   & 8911    & 4.062e-06          & 7.98\\
3           & 17628  & 8911   & 35432   & 5.085e-07          & 7.99\\
4           & 69812  & 35432  & 141612  & 4.362e-07          & 7.99\\
\hline
\end{tabular}
\caption{Test 2-a).  Computed values of errors $e_n = \| u -
  u_h^n \|_{L_2(\Omega_\perp)}$ and $e_n/e_{n+1}$ on the adaptively
  refined meshes. Here, the solution $ u_h^n $ is computed using
  characteristic streamline diffusion method with $ \widetilde{\gamma} = 0.5 $ in the adaptive algorithm. }
\label{test2ap1}
\end{table}

\begin{table}
\center
\begin{tabular}{ l | l l l l l l}
\hline
Nr. of &  Nr. of   & Nr. of & DOF & $e_n = \| u - u_h^n \|_{L_2}$  & $e_n/e_{n+1}$\\
 refinement, $n$& elements & vertices &   &  & \\
\hline
0   & 272    & 157    & 585     & 2.582e-03          &     \\
1           & 1088   & 585    & 2257    & 2.706e-03          & 7.81\\
2           & 4352   & 2257   & 8865    & 3.427e-04          & 7.90\\
3           & 17408  & 8865   & 35137   & 4.307e-05          & 7.96\\
4           & 69632  & 35137  & 139905  & 5.398e-06          & 7.98\\

\hline
\end{tabular}
\caption{Test 2-b).  Computed values of errors $e_n = \| u -
  u_h^n \|_{L_2(\Omega_\perp)}$ and $e_n/e_{n+1}$ on the adaptively
  refined meshes. Here, the solution $ u_h^n $ is computed using
 characteristic streamline diffusion method with $ \widetilde{\gamma} = 0.7 $ in the adaptive algorithm.}
\label{test2bp1}
\end{table}


 \begin{figure}[tbp]
 \begin{center}
 \begin{tabular}{ccc}
 {\includegraphics[scale=0.25, clip=true,]{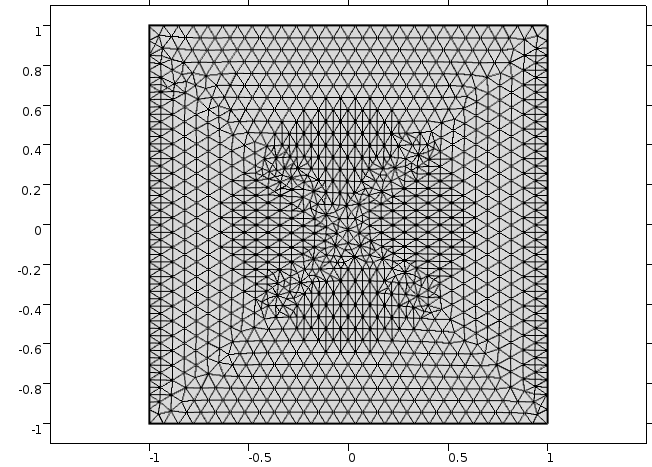}} &
 {\includegraphics[scale=0.25, clip=true,]{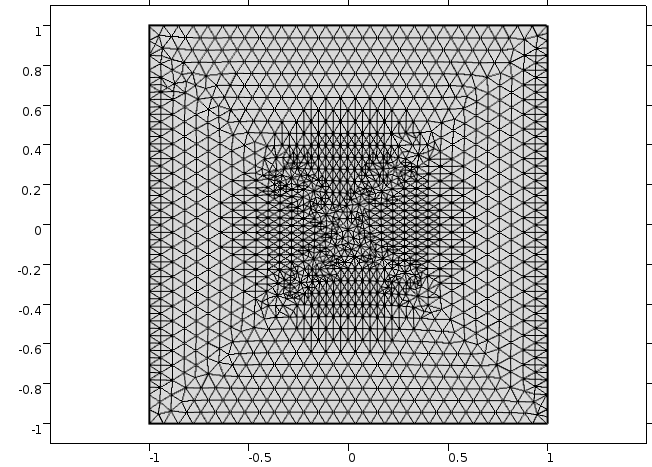}} &
 {\includegraphics[scale=0.25,clip=true,]{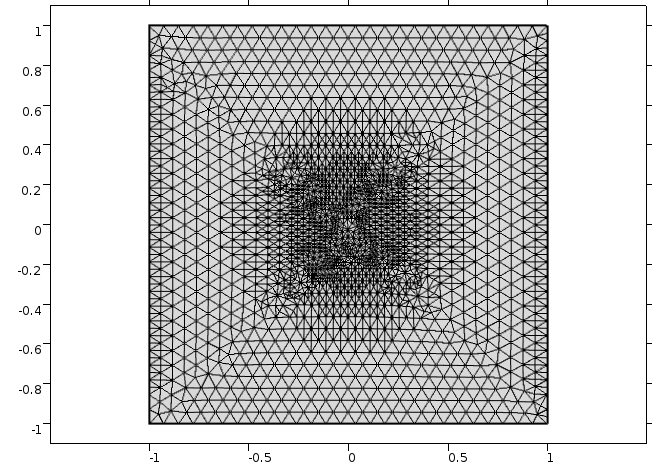}} \\
a)  &  b)  & c)  \\ 
 {\includegraphics[scale=0.25, clip=true,]{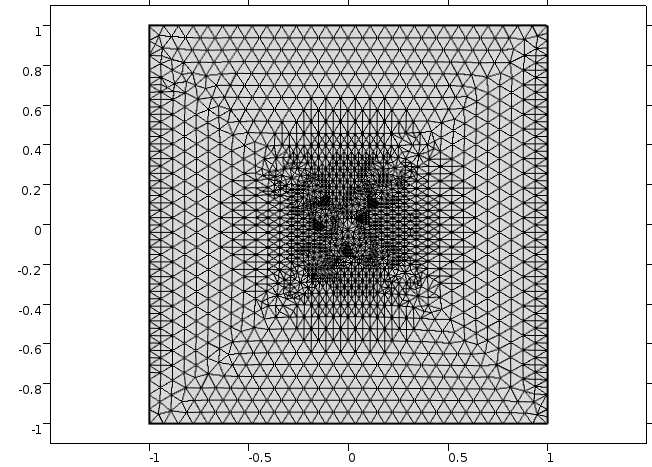}} &
{\includegraphics[scale=0.25, clip=true,]{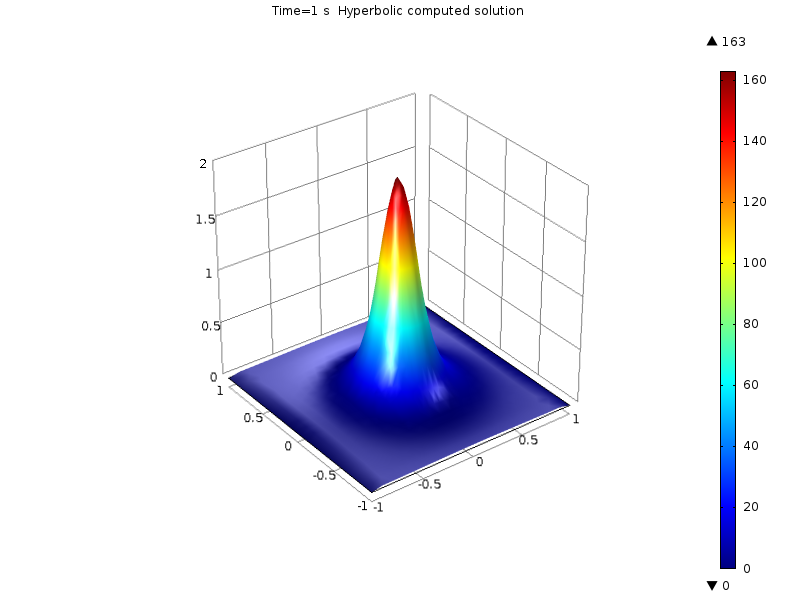}} \\
d)  &  e)
 \end{tabular}
 \end{center} 
 \caption{Test 2-a).  a)-d) Locally adaptively refined meshes of Table
   \ref{test2ap1}; e) Computed solution on the 4 times adaptively refined mesh d). }
 \label{fig:fig2a}
 \end{figure}

 \begin{figure}[tbp]
 \begin{center}
 \begin{tabular}{ccc}
 {\includegraphics[scale=0.25, clip=true,]{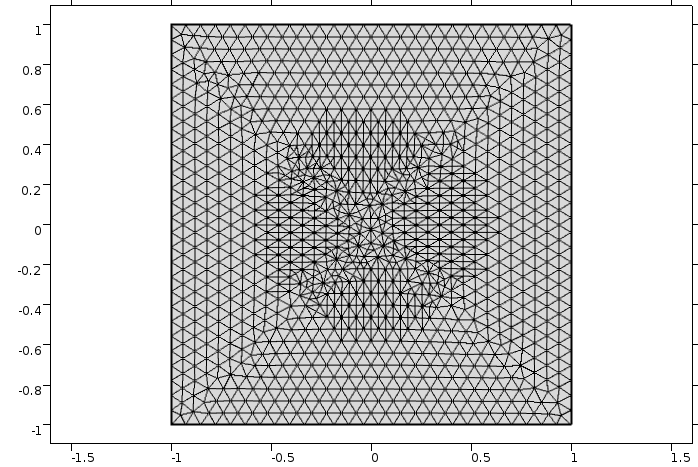}} &
 {\includegraphics[scale=0.25, clip=true,]{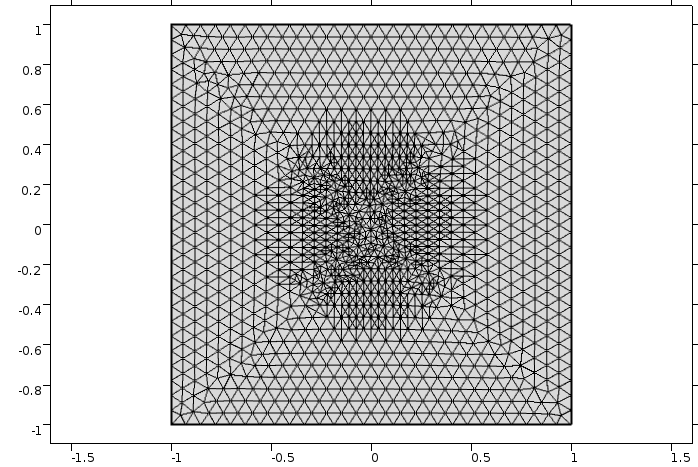}} &
 {\includegraphics[scale=0.25,clip=true,]{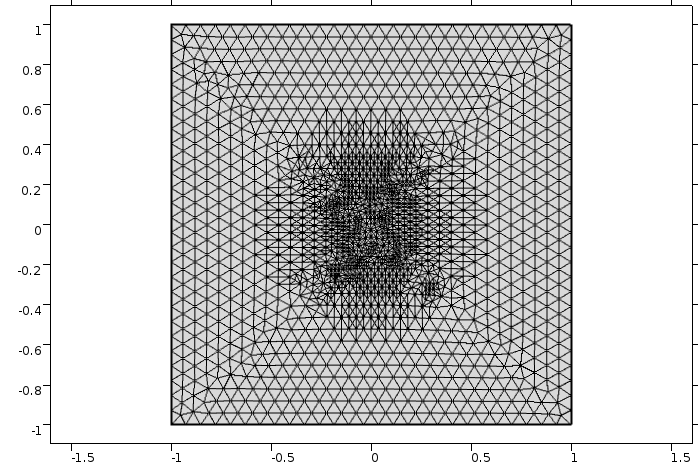}} \\
a)  &  b)  & c)  \\ 
 {\includegraphics[scale=0.25, clip=true,]{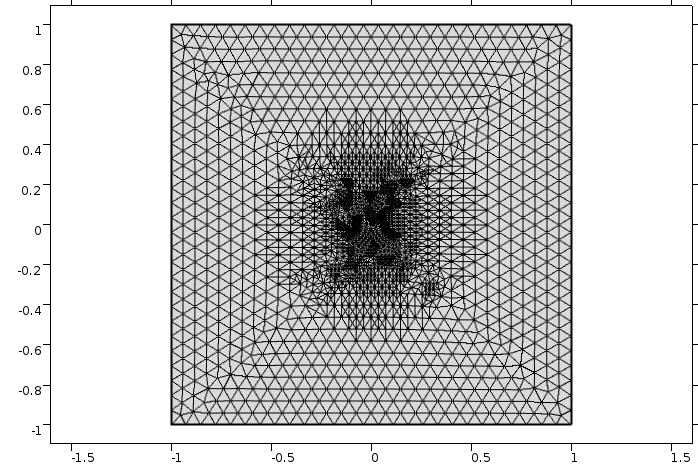}} &
{\includegraphics[scale=0.25, clip=true,]{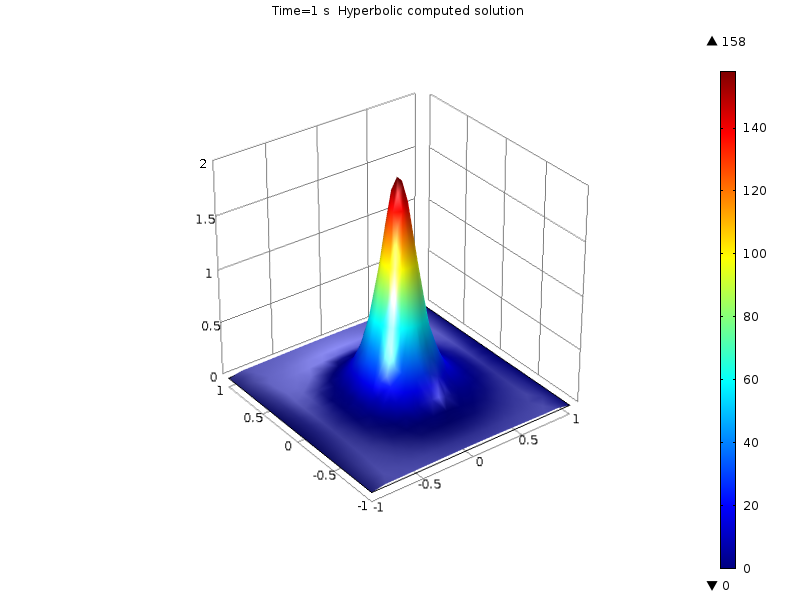}} \\
d)  &  e)
 \end{tabular}
 \end{center}
 \caption{Test 2-b).  a)-d) Locally adaptively refined meshes of Table
   \ref{test2bp1}; e) Computed solution on the 4 times adaptively refined mesh d). }
 \label{fig:fig2b}
 \end{figure}

In this test we perform numerical simulations for the problem
(\ref{model}) with Maxwellian initial condition (\ref{maxwellian})
and for different values of the parameter $\alpha \in (0,1)$. 
Again we use the
error indicator (\ref{adaptalg}) in the adaptive algorithm for local
refinement of meshes and perform two set of tests as in the case of 
Test 1 and with the same values on the parameter $\widetilde{\gamma}$.

For finite element discretization we use semi-streamline diffusion
method as in the Test 1.  To be able to control the formation of the layer 
which appears at the central 
point $ (y, \eta) = (0, 0) $ we use different values
of $\alpha \in (0,1)$ inside the function (\ref{maxwellian}).
 Our computational tests show that the value of the parameter 
$\alpha= 0.19$ is optimal one.

We present results of our computations for $\alpha= 0.19$ in Tables
\ref{test2ap1} and \ref{test2bp1} for Test 2-a) and Test 2-b),
respectively.  Using these Tables and Figures \ref{fig:fig2a} and 
\ref{fig:fig2b}, once again, we  observe significant reduction of the
computational error $e_n = \| u - u_h^n \|_{L_2(\Omega_\perp)}$ on the
adaptively refined meshes. 
 Using Tables \ref{test2ap1} and 
\ref{test2bp1}  again we observe  
 more significant reduction of the computational error 
 in the case a) then in the case b).  Thus,
choosing the parameter $\widetilde{\gamma} = 0.5$ in (\ref{adaptalg})
yields better computational results.

Final solution $ u_h^4$ computed on the 4
times adaptively refined mesh is shown on the Figure
\ref{fig:fig2a}-f) for Test 2-a) and on the Figure \ref{fig:fig2b}-f)
for Test 2-b). Again we observe that, with the above numerical 
 values for the parameters,  $\alpha$ and  $\widetilde{\gamma}$
 we have avoided the
formation of layers and in both tests we have obtained smooth computed
solution $ u_h^4$.  

\subsection{Test 3}

\begin{table}
\center
\begin{tabular}{ l | l l l l l l}
\hline
Nr. of &  Nr. of   & Nr. of & DOF & $e_n = \| u - u^n_h\|_{L_2}$  & $e_n/e_{n+1}$\\
 refinement, $n$& elements & vertices &   &  & \\
\hline
0  & 272   & 157   & 1285  & 1.565e-05    &     \\
1           & 1271  & 597   & 5115  & 9.732e-07   & 16.08\\
2           & 5084  & 2267  & 20937 & 6.052e-08   & 16.08\\
3           & 20336 & 9075  & 79825 & 3.771e-09   & 16.05\\
\hline
\end{tabular}
\caption{Test 3-a).  Computed values of errors $e_n = \| u -
  u_h^n \|_{L_2(\Omega_\perp)}$ and $e_n/e_{n+1}$ on the adaptively
  refined meshes. Here, the solution $ u_h^n $ is computed using
  semi-streamline diffusion method with $ \widetilde{\gamma} = 0.5 $ in the adaptive
  algorithm. }
\label{test3a}
\end{table}

\begin{table}
\center
\begin{tabular}{ l | l l l l l l}
\hline
Nr. of &  Nr. of   & Nr. of & DOF & $e_n = \| u - u^n_h\|_{L_2}$  & $e_n/e_{n+1}$\\
 refinement, $n$& elements & vertices &   &  & \\
\hline
0   & 272   & 157   & 1285  & 1.565e-05    &     \\
1           & 1088  & 585   & 5017  & 1.484e-06   & 15.98\\
2           & 4352  & 2257  & 19825 & 9.289e-07   & 16.02\\
3           & 17408 & 8865  & 78817 & 5.799e-08   & 16.02\\ 
\hline
\end{tabular}
\caption{Test 3-b).  Computed values of errors $e_n = \| u -
  u_h^n \|_{L_2(\Omega_\perp)}$ and $e_n/e_{n+1}$ on the adaptively
  refined meshes. Here, the solution $ u_h^n $ is computed using
  semi-streamline diffusion method  with $ \widetilde{\gamma} = 0.7 $ in the adaptive algorithm. }
\label{test3b}
\end{table}


\begin{figure}[tbp]
 \begin{center}
 \begin{tabular}{ccc}
 {\includegraphics[scale=0.25, clip=true,]{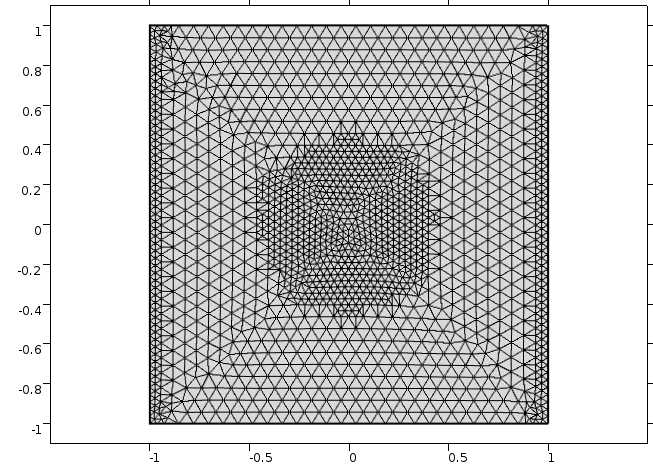}} &
 {\includegraphics[scale=0.25, clip=true,]{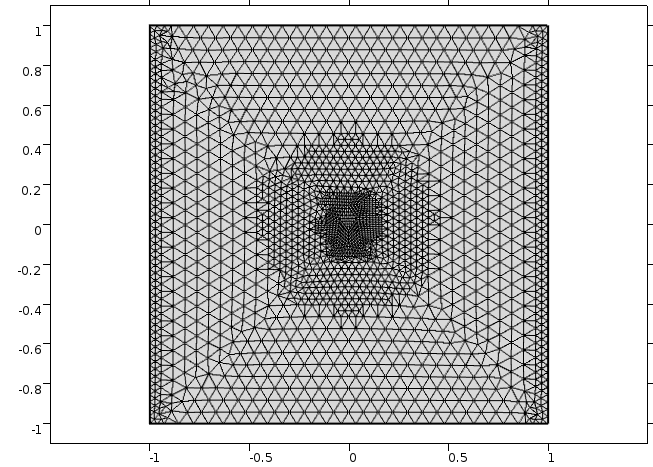}} &
 {\includegraphics[scale=0.25,clip=true,]{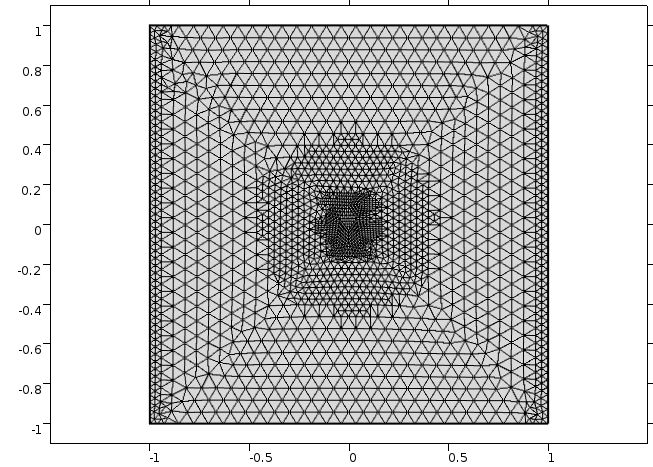}} \\
a)  &  b)  & c)  \\ 
 {\includegraphics[scale=0.25, clip=true,]{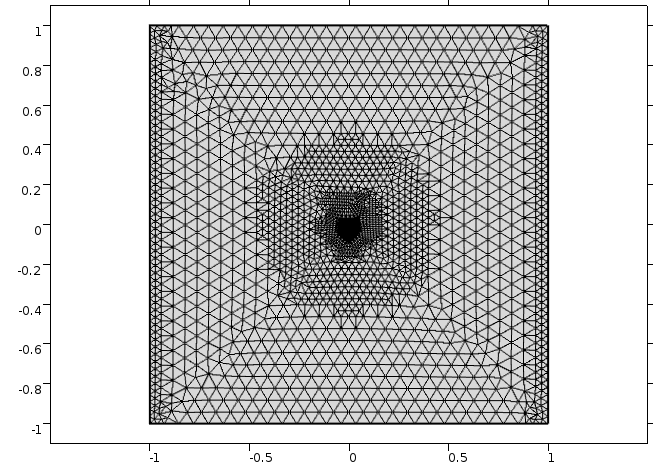}} &
{\includegraphics[scale=0.25, clip=true,]{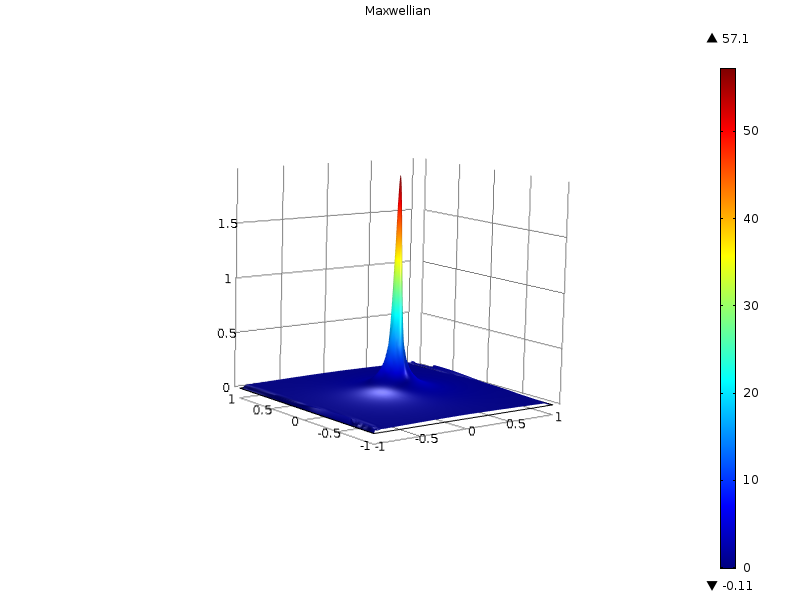}} \\
d)  &  e) 
 \end{tabular}
 \end{center}
 \caption{Test 3-a). a)-d) Locally adaptively refined meshes of Table
   \ref{test3a};  e) Computed solution on the 4 times adaptively refined mesh d).  }
 \label{fig:fig3a}
 \end{figure}

\begin{figure}[tbp]
 \begin{center}
 \begin{tabular}{ccc}
 {\includegraphics[scale=0.25, clip=true,]{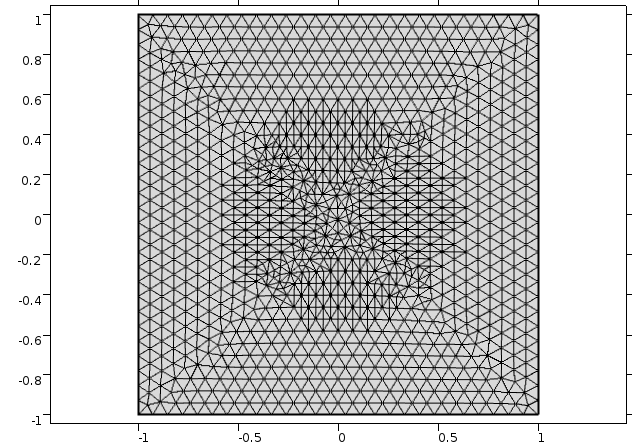}} &
 {\includegraphics[scale=0.25, clip=true,]{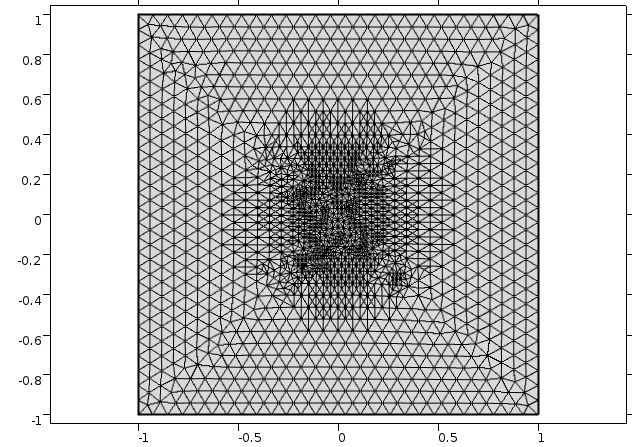}} &
 {\includegraphics[scale=0.25,clip=true,]{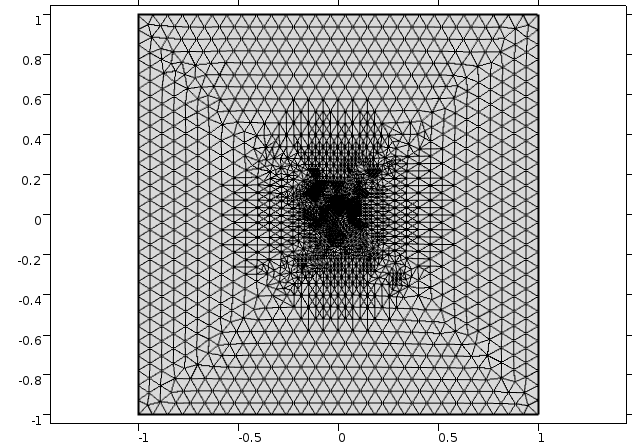}} \\
a)  &  b)  & c)  \\ 
 {\includegraphics[scale=0.25, clip=true,]{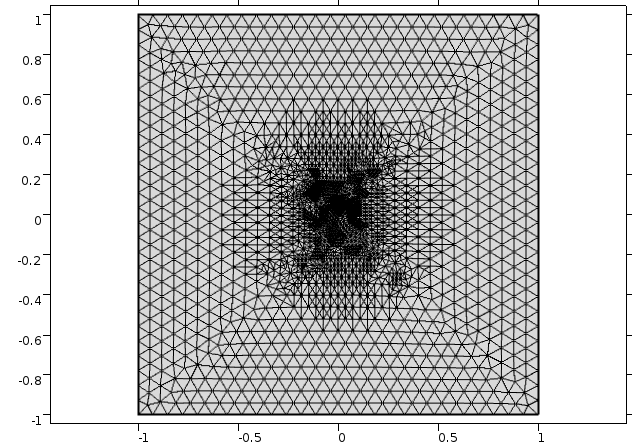}} &
{\includegraphics[scale=0.25, clip=true,]{computed_Max_2.png}} \\
d)  &  e) 
 \end{tabular}
 \end{center}
 \caption{Test 3-b). a)-d) Locally adaptively refined meshes of Table
   \ref{test3b};  e) Computed solution on the 4 times adaptively refined mesh d).  }
 \label{fig:fig3b}
 \end{figure}

In this test we perform numerical simulations of the problem
(\ref{model}) with hyperbolic initial condition (\ref{hyperbolic}) on
the locally adaptively refined meshes. Taking into account results of
our previous Tests 1,2 we take fixed value of $\alpha=0.19$ in
(\ref{hyperbolic}).  For finite element discretization we used the
semi-streamline diffusion method of Section \ref{sec:fyra}.  We again
perform two set of tests with different values of $\tilde{\gamma}$ in
(\ref{adaptalg}): in the Test 3-a) we choose $\tilde{\gamma}=0.5$, 
and in the Test 3-b) we  assign this parameter to be
$\tilde{\gamma}=0.7$.

We present results of our computations in Tables \ref{test3a} and 
\ref{test3b}.  Using these tables and Figures \ref{fig:fig3a} and 
\ref{fig:fig3b} we observe significant reduction of the computational
error $e_n = \| u - u_h^n \|_{L_2(\Omega_\perp)}$ on the adaptively
refined meshes.  Final solutions $ u_h^4 $ computed on the 4 times
adaptively refined meshes are shown on the Figure \ref{fig:fig3a}-f)
for the Test 3-a) and on the Figure \ref{fig:fig3b}-f) for the Test
3-b), respectively.

\section{Conclusion}

\label{Conclusion}

		Finite element method (FEM) is commonly used as
                numerical method for solution of PDEs. In this work
                FEM is applied to compute approximate 
solution of a, {\sl degenerate type}, 
                convection
                dominated convection-diffusion problem. We consider 
linear polynomial approximations and study 
                different finite element discretizations for the
                solutions for pencil-beam models based on Fermi and
                Fokker-Planck equations. First we have derived 
stability estimates and proved optimal convergence rates 
(due to the maximal available regularity of the exact solution) in 
a more general setting in physical domain. Then we have specified some 
``goal oriented'' numerical schemes. 
These numerical schemes are  
                derived using a variety 
Galerkin methods such as Standard
                Galerkin, Semi-Streamline Diffusion, Characteristic
                Galerkin and  Characteristic Streamline
                Diffusion methods. Our focus has been in two of these 
approximation schemes: (i) the Semi-Streamline Diffusion and (ii) the 
Characteristic Streamline
                Diffusion methods.\\

For these two setting, we derived a priori error estimates and
formulated the adaptive algorithm. 
 Since in our numerical tests we have used a closed form of the
 analytic solution, therefore it suffices to use a priori error estimates
 for the local mesh refinements.
Numerically we tested our adaptive
algorithm for different type of initial data in (\ref{model}) in three tests
with different mesh refinement parameter $\widetilde{\gamma}$ in the mesh
refinement criterion (\ref{adaptalg}). The goal of our numerical
experiments was to remove oscillatory behavior of the computational
solution as well as removing of the formation of the artificial layer.

 Using Tables and Figures of section \ref{sec:num} we can conclude
 that the oscillatory behavior of the computed solutions which were
 obtained in \cite{MA-EL:2007} are appearing for problems with
 non-smooth initial data and on non-refined meshes. In this work we
 removed these oscillations by  adaptive mesh refinement and
 decreasing the dominance of the coefficient in the convection term.

\section*{Acknowledgments}

The research of first and second authors is supported by the
Swedish Research Council (VR).

\bibliography{references}         

\end{document}